\newtheorem{Theorem}{Theorem}[section]
\newtheorem{Lemma}[Theorem]{Lemma}
\newtheorem{Corollary}[Theorem]{Corollary}
\newtheorem{Example}[Theorem]{Example}
\newtheorem{Remark}[Theorem]{Remark}
\begin{document}

\title[Weyl functional on 4-manifolds]{The Weyl functional on 4-manifolds of positive Yamabe invariant}

\author[C. Sung]{Chanyoung Sung}
 \address{Dept. of Mathematics Education  \\
          Korea National University of Education
          }
\email{cysung@kias.re.kr}


\keywords{Weyl curvature, Yamabe constant, orbifold, self-dual}

\subjclass[2020]{58E99, 57R18}

\date{}

\begin{abstract}
It is shown that on every closed oriented Riemannian 4-manifold
$(M,g)$ with positive scalar curvature,  $$\int_M|W^+_g|^2d\mu_{g}\geq 2\pi^2(2\chi(M)+3\tau(M))-\frac{8\pi^2}{|\pi_1(M)|},$$ where $W^+_g$, $\chi(M)$ and $\tau(M)$ respectively denote the self-dual Weyl tensor of $g$, the Euler characteristic and the signature of $M$. This generalizes Gursky's inequality \cite{gur} for the case of $b_1(M)>0$ in a much simpler way.

We also extend all such lower bounds of the Weyl functional to 4-orbifolds including Gursky's inequalities for the case of $b_2^+(M)>0$ or $\delta_gW^+_g=0$, and obtain topological obstructions to the existence of self-dual orbifold metrics of positive scalar curvature.
\end{abstract}

\maketitle


\tableofcontents

\setcounter{section}{0}
\setcounter{equation}{0}

\section{Introduction and statement of main results}


In a Riemannian manifold $(M^n,g)$ its curvature tensor is decomposed into its irreducible summands under the orthogonal group, i.e
the scalar curvature $s_g$, the trace-free part $\stackrel{\circ}r_{g}:=\textrm{Ric}_g-\frac{s_g}{n}g$ of the Ricci curvature $\textrm{Ric}_g$,
and the Weyl curvature $W_g$. Integrals of certain combinations of these curvature parts give not only topological invariants of $M$, but an optimal metric on $M$ can also be often found by minimizing the $L^{\frac{n}{2}}$-norm of a part of the curvature tensor. The most well-studied example is the case of the scalar curvature, known as the Yamabe problem \cite{L&P}.

From now on $M$ is supposed as smooth and closed.
The \textit{Yamabe constant} $Y(M,[g])$ of a conformal class $$[g]:= \{\psi g \mid \psi:M \stackrel{C^\infty}{\rightarrow} \Bbb R_+ \}$$ is defined as  $\inf_{\hat{g}\in [g]}\mathfrak{F}(\hat{g})$ for $$\mathfrak{F}(\hat{g}):=\frac{\int_M s_{\hat{g}}\
d\mu_{\hat{g}}}{\textrm{Vol}(\hat{g})^{\frac{n-2}{n}}}$$ where $\textrm{Vol}(\hat{g})$ is the volume of $(M,\hat{g})$. The infimum is always attained by a smooth metric called \textit{Yamabe metric} with constant scalar curvature. Moreover  $Y(M,[g])$ can be also expressed as
$$\inf\{Y_g (f)|f\in L^2_1 (M)-\{0\}\}$$ where the Yamabe functional $Y_g$ is defined as
$$Y_g (f) := \frac{\int_M (a_n|df|_g^2 + s_g f^2) \ d\mu_g }{ {\left( \int_M |f|^{p_n}\ d\mu_g \right) }^{\frac{2}{p_n}}}$$
for $a_n = \frac{4(n-1)}{n-2}$ and $p_n=\frac{2n}{n-2}$. The \textit{Yamabe invariant} $Y(M)$ is defined by
the supremum of $Y(M,[g])$ over all conformal classes of $M$.
An important formula derived from the solution of the Yamabe problem and the H\"older inequality is that
\begin{eqnarray}\label{inflation}
|Y(M,[g])|=(\inf_{\hat{g}\in[g]}\int_M|s_{\hat{g}}|^{\frac{n}{2}}d\mu_{\hat{g}})^{\frac{2}{n}}
\end{eqnarray}
where the infimum is realized by Yamabe metrics and its proof is given in \cite{LeB1}.

In this paper we are concerned with the Weyl functional $\mathcal{W}$ defined as
$$\mathcal{W}(M,[g]):=\int_M|W_g|^{\frac{n}{2}}d\mu_{g}$$ which is an invariant of a conformal class $[g]$. By $|W_g|$ we always  mean the norm $(\frac{1}{4}W_{ijkl}W^{ijkl})^{\frac{1}{2}}$ of $W_g\in \textrm{End}(\wedge^2(M))$ w.r.t. the metric $g$ under consideration.
O. Kobayashi \cite{koba} defined a topological invariant $$\nu(M):=\inf_g \mathcal{W}(M,[g])$$ where $g$ runs over all smooth Riemannian metrics on $M$. Unless there is a conformally-flat metric or a collapsing sequence of metrics with bounded curvature, it's very difficult to compute this invariant of a general manifold.

When the dimension is 4, the Weyl curvature acquires not only physical meanings \cite{penrose, witten}, but also an additional structure.
If $(M^4,g)$ is oriented, $W_g$ is further reducible under the special orthogonal group into $W_g^++W_g^-$.
The curvature integrals $\mathcal{W}_{\pm}(M,[g]):=\int_M|W^{\pm}_g|^2d\mu_{g}$ are related by
\begin{eqnarray*}
\mathcal{W}(M,[g])&=& \mathcal{W}_+(M,[g])+\mathcal{W}_-(M,[g]) \\ &=& -12\pi^2\tau(M)+2\mathcal{W}_+(M,[g])\\ &=& 12\pi^2\tau(M)+2\mathcal{W}_-(M,[g])
\end{eqnarray*}
where $\tau(M)$ denotes the signature of $M$.
So any estimation of $\mathcal{W}_+$ is equivalent to that of $\mathcal{W}$, and
\begin{eqnarray}\label{DSLee}
\nu(M)\geq 12\pi^2\tau(M)
\end{eqnarray}
with equality being held if $M$ admits a self-dual metric, i.e $\mathcal{W}_-(M,[g])=0$.
Thus the $\nu$ invariant of a connected sum of self-dual manifolds is computable by using the connected sum formula \cite{koba} $$\nu(M_1\# M_2)\leq \nu(M_1)+\nu(M_2),$$ and this will be discussed in Subsection \ref{coffee2u}.

It turns out that the estimation of $\mathcal{W}(M,[g])$ on $(M^4,[g])$ with positive Yamabe constant gets more accessible, as the following theorems show. Thus as a means to quantify the nonexistence of a self-dual metric with positive scalar curvature
and for the purpose of finding a canonical one among all metrics with positive scalar curvature, we are lead to define a refined invariant $$\nu_+(M):=\inf_{g\in \mathcal{C}_+} \mathcal{W}(M,[g])$$ where $\mathcal{C}_+$ is the set of all smooth Riemannian metrics on $M$ satisfying $Y(M,[g])>0$. (If $M$ has $\mathcal{C}_+=\emptyset$, $\nu_+(M)$ may be defined to be $\infty$.)


\begin{Theorem}[Chang-Gursky-Yang \cite{CGY2, CGY}]
Let $(M,g)$ be a smooth closed Riemannian 4-manifold with $Y(M,[g])>0$, which is not diffeomorphic to $S^4$ or $\Bbb RP^4$.
Then
\begin{eqnarray}\label{thank}
\mathcal{W}(M,[g])\geq 4\pi^2\chi(M)
\end{eqnarray}
where the equality holds iff $(M,g)$ is conformal to $\Bbb CP^2$ with the Fubini-Study metric or a quotient of $S^1\times S^3$ with a product metric of standard metrics. If
$$
\mathcal{W}(M,[g])< 8\pi^2\chi(M),
$$
then there exists a metric in $[g]$ with positive Ricci curvature.
\end{Theorem}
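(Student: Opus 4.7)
The plan is to reduce both conclusions to statements about the Schouten tensor $A_g = \frac{1}{2}(\textrm{Ric}_g - \frac{s_g}{6}g)$ and its second symmetric function $\sigma_2(A_g) = \frac{s_g^2}{96} - \frac{|\mathring r_g|^2}{8}$ via the Chern-Gauss-Bonnet identity in dimension four,
\[
8\pi^2\chi(M) = \int_M |W_g|^2 \, d\mu_g + 4\int_M \sigma_2(A_g)\, d\mu_g.
\]
Since $\chi(M)$ and $\mathcal W(M,[g])$ are conformally invariant, so is $\int_M \sigma_2(A_g)\,d\mu_g$. This recasts $\mathcal W \geq 4\pi^2\chi$ as the estimate $\int_M \sigma_2(A_g)\, d\mu_g \leq \tfrac{1}{4}\mathcal W(M,[g])$, and identifies the strict hypothesis $\mathcal W(M,[g]) < 8\pi^2\chi(M)$ with positivity of the conformal invariant, $\int_M \sigma_2(A_g)\,d\mu_g > 0$.

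For the Ricci-positive statement I would invoke the Chang-Gursky-Yang existence theorem for the $\sigma_2$-Yamabe problem: under $Y(M,[g])>0$ and positivity of the conformal invariant $\int_M \sigma_2(A_g)\,d\mu_g$, the conformal class contains a metric $\tilde g$ with $s_{\tilde g}>0$ and $\sigma_2(A_{\tilde g})>0$ pointwise. Writing $\sigma_1(A_{\tilde g}) = s_{\tilde g}/6$, pointwise positivity of $(\sigma_1,\sigma_2)(A_{\tilde g})$ places the Schouten eigenvalues in the $\Gamma_2^+$ cone, and a short elementary symmetric function estimate (using $(a+b+c)^2 \geq 3(ab+bc+ca)$ to control the most negative eigenvalue) forces every eigenvalue of $\textrm{Ric}_{\tilde g} = 2A_{\tilde g} + \sigma_1(A_{\tilde g})\, g$ to be positive.

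For the sharp inequality $\mathcal W \geq 4\pi^2\chi$ I would argue by contradiction with a concentration-compactness argument. Suppose some conformal class in $\mathcal C_+$ violates $\mathcal W \geq 4\pi^2\chi$ and take a minimizing sequence $\{[g_k]\}$ of conformal classes. Either the sequence converges modulo conformal transformations to a smooth extremal $g_\infty$, in which case the Euler-Lagrange equation $\delta W^\pm_{g_\infty} = 0$ (Bach-flatness) combined with the pinching and positive Yamabe constant forces $g_\infty$ into a short classification list not containing $M$; or $L^2$-concentration of Weyl curvature produces a bubble whose rescaled limit is a complete Bach-flat 4-manifold of finite Weyl energy conformally equivalent to $S^4$ or $\Bbb RP^4$, again contradicting $M \notin \{S^4, \Bbb RP^4\}$. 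The rigidity statement is then obtained by tracing through the equalities: saturation forces Bach-flatness, equality in the Gauss-Bonnet identity, and vanishing of one of $W^\pm$, which together narrow $(M,g)$ to Fubini-Study $\Bbb CP^2$ or conformally-flat quotients of $S^1 \times S^3$.

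The principal obstacle is the bubbling analysis. Since $\mathcal W$ is conformally invariant, hence non-coercive, a minimizing sequence can lose compactness by concentrating at points; one must quantify the energy carried by each bubble in units of $8\pi^2$ (via a sharp Sobolev-type lower bound on the Weyl energy of concentrating configurations) and show that any nontrivial bubble is conformally $S^4$ or $\Bbb RP^4$, to preclude bubbling under the exclusion hypothesis.
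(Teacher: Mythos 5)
First, note that the paper does not prove this statement at all: it is quoted as background and attributed to Chang--Gursky--Yang \cite{CGY2, CGY}, so there is no ``paper's own proof'' to match against; your proposal has to be measured against the actual arguments in those references. Your reduction via the Schouten tensor is correct and is indeed the right framework: with $|W|^2=\frac14 W_{ijkl}W^{ijkl}$ one has $8\pi^2\chi(M)=\mathcal W(M,[g])+4\int_M\sigma_2(A_g)\,d\mu_g$, so $\mathcal W<8\pi^2\chi$ is exactly $\int_M\sigma_2(A_g)\,d\mu_g>0$, and the Ricci-positivity conclusion then follows from the CGY existence theorem for the $\sigma_2$-Yamabe problem together with the algebraic fact that $\Gamma_2^+$ in dimension four forces $\mathrm{Ric}=2A+\sigma_1(A)g>0$. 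That is essentially the proof in \cite{CGY}, so the second half of your proposal is sound (your three-variable inequality $(a+b+c)^2\geq 3(ab+bc+ca)$ should be its four-eigenvalue analogue, but that is cosmetic).

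The genuine gap is in your treatment of the sharp inequality $\mathcal W\geq 4\pi^2\chi$ and its equality case. The concentration-compactness scheme you outline is not a proof: it presupposes existence and regularity of minimizers of the conformally invariant Weyl functional over conformal classes (a hard and essentially open problem in this generality), a classification of complete Bach-flat four-manifolds of finite Weyl energy that is not available, and a quantization of bubble energy that you do not supply. Moreover the logic of the bubbling step is off --- a bubble being conformal to $S^4$ says nothing about $M$, so it cannot ``contradict $M\notin\{S^4,\Bbb RP^4\}$'' --- and saturation of $\mathcal W=4\pi^2\chi$ for a fixed conformal class does not force Bach-flatness, since no variational characterization of the given class is in play. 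The actual argument of \cite{CGY2} is quite different: under $Y>0$ and $\mathcal W<4\pi^2\chi$ (equivalently $\int\sigma_2>\frac14\mathcal W\geq 0$) one solves a regularized family of fully nonlinear conformally invariant equations to produce a metric in $[g]$ satisfying the pointwise pinching $\sigma_2(A)>\frac14|W|^2$, and then invokes Margerin's Ricci-flow pinching theorem to conclude that $M$ is diffeomorphic to $S^4$ or $\Bbb RP^4$; the inequality of the theorem is the contrapositive, and the borderline case $\mathcal W=4\pi^2\chi$ is handled by a separate limiting/rigidity analysis that singles out the Fubini--Study $\Bbb CP^2$ (where $W^-=0$) and the conformally flat quotients of $S^1\times S^3$ (where $W=0$). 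Without this pinching-plus-Ricci-flow input, or some substitute for it, your sketch of the first assertion does not close.
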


Fu \cite{Fu} and Bour and Carron \cite{bour} obtained similar types of rigidity results under upper bounds of $\mathcal{W}$.
A series of Gursky's remarkable results asserts that the existence of a nontrivial harmonic tensor requires the increase of $\mathcal{W}$ :
\begin{Theorem}[Gursky \cite{gur, gur2}]\label{th1}
Let $(M,g)$ be a smooth closed oriented Riemannian 4-manifold with $Y(M,[g])> 0$.
\begin{description}
  \item[(i)] If $b_2^+(M)>0$, then  $\mathcal{W}_+(M,[g])\geq \frac{4\pi^2}{3}(2\chi(M)+3\tau(M)),$
   where the equality holds iff $g$ is conformal to a K\"ahler-Einstein metric.
  \item[(ii)] If $b_1(M)>0$, then $\mathcal{W}_+(M,[g])\geq 2\pi^2(2\chi(M)+3\tau(M))$, where the equality holds iff $g$ is conformal to a quotient of $S^3\times \Bbb R$.
  \item[(iii)] If $\delta_gW_g^+=0$ for nonzero $W_g^+$, then $\mathcal{W}_+(M,[g])\geq \frac{4\pi^2}{3}(2\chi(M)+3\tau(M))$, where the equality holds iff $(M,g)$ is Einstein which is either K\"ahler or the quotient of a K\"ahler manifold by an anti-holomorphic isometric involution.
\end{description}
\end{Theorem}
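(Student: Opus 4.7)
The three statements share a common architecture. In each case one identifies a nontrivial ``harmonic tensor'' $T$ — a harmonic self-dual $2$-form in (i), a harmonic $1$-form in (ii), and $W^+_g$ itself in (iii) (whose harmonicity comes from $\delta_g W^+_g = 0$ together with the second Bianchi identity) — applies the corresponding Weitzenböck formula, and combines the resulting integral identity with the Gauss--Bonnet--signature formula
$$4\pi^2\bigl(2\chi(M)+3\tau(M)\bigr) = \int_M\left(2|W^+_g|^2 + \frac{s_g^2}{24} - \frac{|\stackrel{\circ}r_g|^2}{2}\right)d\mu_g.$$
The problem then reduces to estimating $\int s_g^2$ and $\int|\stackrel{\circ}r_g|^2$ against $\int|W^+_g|^2$. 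Since $\mathcal{W}_+(M,[g])$ is conformally invariant, I would always work with a Yamabe metric in $[g]$, so that $s_g$ is a positive constant equal to $Y(M,[g])$ and the Yamabe/Sobolev inequality is available as an immediate analytic tool.

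For (i), pick a harmonic self-dual $2$-form $\omega\ne 0$, which exists because $b_2^+(M)>0$. The Weitzenböck identity $\nabla^*\nabla\omega = 2W^+_g(\omega)-\tfrac{s_g}{3}\omega$ pairs with $\omega$ and integrates to
$$\int_M|\nabla\omega|^2\, d\mu_g = \int_M\left(2\langle W^+_g(\omega),\omega\rangle - \tfrac{s_g}{3}|\omega|^2\right) d\mu_g.$$
I would combine this with (a) the sharp pointwise bound $\langle W^+\omega,\omega\rangle \leq \sqrt{2/3}\,|W^+||\omega|^2$ coming from the largest-eigenvalue estimate for a trace-free symmetric $3\times 3$ endomorphism, and (b) the refined Kato inequality $|\nabla\omega|^2 \geq \tfrac{3}{2}\bigl|d|\omega|\bigr|^2$ for self-dual forms. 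Setting $u = |\omega|^{1/2}$ reshapes the result into a conformal-Laplacian-type inequality; the Yamabe inequality applied to $u$, Hölder, and then insertion into the Gauss--Bonnet--signature identity produce the claimed bound.

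For (ii), take a nonzero harmonic $1$-form $\alpha$, which exists because $b_1(M)>0$. The classical Bochner identity $\int|\nabla\alpha|^2 + \int\mathrm{Ric}_g(\alpha,\alpha)=0$, the refined Kato $|\nabla\alpha|^2\ge \tfrac43\bigl|d|\alpha|\bigr|^2$, and the Yamabe inequality for $u=|\alpha|^{1/2}$ together yield an $L^2$ lower bound on the trace-free Ricci tensor of the form $\int|\stackrel{\circ}r_g|^2 \ge \tfrac{1}{12}\int s_g^2$, which via the displayed formula is equivalent to $\mathcal{W}_+(M,[g])\ge 2\pi^2(2\chi(M)+3\tau(M))$. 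Part (iii) follows the recipe of (i) with $\omega$ replaced by $W^+_g$, now viewed as a harmonic section of $\mathrm{Sym}^2_0(\wedge^+)$: a Derdziński-type Weitzenböck identity expresses $\int|\nabla W^+_g|^2$ in terms of $\int s_g|W^+_g|^2$ and the cubic algebraic term $\int\det W^+_g$, and one proceeds exactly as in (i), using the sharp determinant bound $|\det W^+|\le \tfrac{1}{\sqrt 6}|W^+|^3$ for trace-free symmetric $3\times 3$ matrices and the Kato refinement appropriate for harmonic Weyl tensors.

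The crux in each part is ensuring that all three inequalities — the pointwise curvature/algebraic bound, the refined Kato inequality, and the Yamabe/Sobolev inequality — carry the \emph{sharp} constants needed for the final estimate. Worse, the equality cases require these inequalities to hold with equality simultaneously, which forces the harmonic tensor $T$ to be parallel with prescribed eigenvalue structure: a two-to-one eigenvalue pattern on $\wedge^+$ in (i) and (iii) (the Kähler signature), and $|\alpha|$ constant with level sets producing the local product structure $S^3\times\mathbb{R}$ in (ii). Tracing these equality conditions and ruling out spurious alternatives is the delicate step I expect to absorb most of the technical effort.
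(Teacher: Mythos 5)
The statement you are proving is quoted from Gursky and is not re-proved verbatim in the paper, but the paper does re-derive each part: (i) and (iii) in orbifold generality via Theorems \ref{th4} and \ref{th5}, and (ii) via Theorem \ref{th2}. For (i) and (iii) your outline coincides with the paper's (and Gursky's) method: Weitzenb\"ock formula, the sharp pointwise bounds $W^+(\psi,\psi)\le\sqrt{2/3}\,|W^+||\psi|^2$ and $\det W^+\le\frac{\sqrt6}{18}|W^+|^3$, the refined Kato inequalities with constants $3/2$ and $5/3$, the substitution $u=|T|^{1/2}$ (resp.\ $|W^+|^{1/3}$), and the Yamabe functional; note only that your determinant constant $1/\sqrt6$ is three times the sharp value in the paper's normalization, and sharpness there is essential. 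The genuine divergence is in (ii): you follow Gursky's original route (harmonic $1$-form, Bochner identity, an $L^2$ bound $\int|\stackrel{\circ}r_g|^2\ge\frac1{12}\int s_g^2$), whereas the whole point of the paper is to replace that analysis by an elementary covering argument (Lemma \ref{Carol}). There one observes that $d:=2\pi^2(2\chi(M)+3\tau(M))-\mathcal{W}_+(M,[g])=\int_M(\frac{s^2}{48}-\frac{|\stackrel{\circ}r|^2}{4})\,d\mu$ takes the same value for every metric in the conformal class of the pullback to a $k$-fold cover (since $\chi$, $\tau$ and $\mathcal{W}_+$ are topological or conformal invariants), scales by $k$ under the cover, and is bounded above by $Y(\tilde M,[p^*g])^2/48\le Y(S^4)^2/48=8\pi^2$ using formula (\ref{inflation}) and Aubin's inequality; hence $d\le 8\pi^2/k$, and when $b_1(M)>0$ one has covers of arbitrarily large degree, forcing $d\le0$. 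This buys a much shorter proof, avoids the Bochner/modified-scalar-curvature machinery entirely, and extends the inequality to manifolds with $b_1=0$ but finite (or residually finite) fundamental group, with the defect term $-8\pi^2/|\pi_1(M)|$ — which is the paper's Theorem \ref{th2}. The price is that the covering argument does not recover the equality characterization ``conformal to a quotient of $S^3\times\Bbb R$'' in (ii), only the round-sphere rigidity in the finite-$\pi_1$ case.
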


The 1st purpose of this article is to generalize the part (ii) of Gursky's theorem to any closed oriented 4-manifolds by using a simpler method than that of \cite{gur}.
\begin{Theorem}\label{th2}
Let $(M,g)$ be a smooth closed oriented Riemannian 4-manifold with $Y(M,[g])\geq 0$, and $|\cdot|$ of a group denote its order.
\begin{description}
  \item[(i)] If $|\pi_1(M)|< \infty$, then $$\mathcal{W}_+(M,[g])\geq 2\pi^2(2\chi(M)+3\tau(M))-\frac{8\pi^2}{|\pi_1(M)|},$$ where the equality holds iff $(M,g)$ is conformal to a round 4-sphere.
  \item[(ii)] If  $|H_1(M;\Bbb Z)|<\infty$, then  $$\mathcal{W}_+(M,[g])\geq 2\pi^2(2\chi(M)+3\tau(M))-\frac{8\pi^2}{|H_1(M;\Bbb Z)|},$$  where the equality holds iff $(M,g)$ is conformal to a round 4-sphere.
  \item[(iii)] If $\pi_1(M)$ contains a subgroup of arbitrarily large finite index, then   $$\mathcal{W}_+(M,[g])\geq 2\pi^2(2\chi(M)+3\tau(M)).$$
\end{description}
\end{Theorem}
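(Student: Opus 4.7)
The plan is to prove a single ``universal lower bound'' valid for every closed oriented Riemannian $4$-manifold of nonnegative Yamabe constant, and then to lift it through a suitable finite cover of $M$. Specifically, I would first establish: for any closed oriented $(N,h)$ with $Y(N,[h])\geq 0$,
$$\mathcal{W}_+(N,[h])\geq 2\pi^2(2\chi(N)+3\tau(N))-8\pi^2,$$
with equality if and only if $(N,h)$ is conformal to the round $S^4$. This is a direct consequence of the Chern--Gauss--Bonnet formula and the signature formula, which together give, for any $h\in[h]$,
$$\mathcal{W}_+(N,[h])=2\pi^2(2\chi(N)+3\tau(N))+\tfrac{1}{4}\!\int_N|\mathring r_h|^2\,d\mu_h-\tfrac{1}{48}\!\int_N s_h^2\,d\mu_h;$$
taking $h$ to be a Yamabe metric makes $s_h$ constant so that $\int_N s_h^2\,d\mu_h=Y(N,[h])^2$, and Aubin's inequality $Y(N,[h])\leq Y(S^4)=8\pi\sqrt{6}$ (so that $Y(N,[h])^2\leq 384\pi^2=48\cdot 8\pi^2$) yields the claim. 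The rigidity is Aubin's.

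Granted the universal bound, each of (i), (ii), (iii) is a one-line pullback argument that exploits the multiplicativity of $\chi$, $\tau$ and $\int|W_+|^2$ under a finite cover of degree $d$, together with the fact that the sign of $Y(\,\cdot\,,[\cdot])$ is preserved under pullback. For (i), apply the universal bound to the universal cover $\pi\colon\tilde M\to M$, which has degree $d=|\pi_1(M)|$; dividing by $d$ produces the stated estimate. For (ii), use instead the regular abelian cover of $M$, whose deck transformation group is $H_1(M;\mathbb{Z})$ (finite by hypothesis); the same pullback argument gives the stated estimate with $|H_1(M;\mathbb{Z})|$ in place of $|\pi_1(M)|$. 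For (iii), for each finite-index subgroup $H_k\leq\pi_1(M)$ of index $d_k$, apply the universal bound to the corresponding cover $M_k\to M$ and divide by $d_k$ to get
$$\mathcal{W}_+(M,[g])\geq 2\pi^2(2\chi(M)+3\tau(M))-\tfrac{8\pi^2}{d_k},$$
then let $d_k\to\infty$.

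The rigidity statements in (i) and (ii) follow from Aubin's rigidity applied on the cover: equality forces the chosen cover of $M$ to be conformal to $(S^4,g_{\mathrm{round}})$, and since $S^{2k}$ admits no nontrivial free finite group action preserving orientation (the antipodal involution being orientation-reversing in even dimension), $M$ itself must be conformal to the round $S^4$.

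The only substantive content is the universal bound, and that is simply Chern--Gauss--Bonnet together with Aubin's sharp Sobolev inequality; no harmonic $1$-form enters the argument. This is the source of the promised simplification over Gursky's original proof of the $b_1(M)>0$ case: harmonic forms are effectively replaced by finite covers.
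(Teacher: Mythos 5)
Your proposal is correct and follows essentially the same route as the paper: the ``universal bound'' you isolate is exactly the paper's key lemma in the case $k=1$ (Chern--Gauss--Bonnet plus signature formula, evaluated on a Yamabe metric, plus Aubin's inequality $Y\leq Y(S^4)=8\sqrt{6}\pi$), and the paper likewise derives (i)--(iii) by running this on finite covers associated to the universal cover, the kernel of $\pi_1(M)\to H_1(M;\Bbb Z)$, and a sequence of finite-index subgroups, with the same rigidity discussion (round $S^4$ is the only conformal class attaining $Y(S^4)$, and $S^4$ admits no nontrivial orientation-preserving free finite action --- the paper justifies this last point via the Lefschetz fixed point theorem rather than your remark about the antipodal map).
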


One can carry all the above discussion more generally to the orbifold setting. An $n$-orbifold is a Hausdorff 2nd-countable topological space which is locally modelled on a quotient $\Bbb R^n/\Gamma$ of $\Bbb R^n$ by a finite group action $\Gamma$. In this paper we assume that every orbifold is smooth by requiring that all transition functions are smooth, and $\Gamma$ is a subgroup of $SO(n)$ acting freely on $\Bbb R^n-\{0\}$. An orientation of an orbifold is meant by that on the complement of its singular locus, and we regard a manifold as a special kind of orbifolds.
A smooth Riemannian $n$-orbifold is a smooth Riemannian manifold outside orbifold-singular points around each of which the metric is locally the projection of a smooth $\Gamma$-invariant metric on $\Bbb R^n$. By an orbifold metric we always mean a smooth Riemannian orbifold metric.

Orbifolds naturally arise as a limit or quotients of manifolds with a geometric structure, and they are being studied extensively not only in geometry and but also in physics such as string theory. Many classical theorems on manifolds are generalized to orbifolds. For instance the DeRham theorem, the Hodge theorem, and the Chern-Gauss-Bonnet theorem hold for a smooth closed orbifold as well \cite{Baily, Satake, sat}.
Also the Yamabe problem on an orbifold can be defined in the obvious way, and turns out to be almost parallel to the case of a smooth manifold. For details, the reader may consult K. Akutagawa  \cite{aku}.

A Yamabe metric of a smooth closed Riemannian $n$-orbifold $(M,g)$ exists if
\begin{eqnarray}\label{Ahncheolsoo}
Y(M,[g])< \min_{i}\frac{Y(S^n)}{|\Gamma_i|^{\frac{2}{n}}},
\end{eqnarray}
where the minimum is taken over all the orbifold groups $\Gamma_i$, and the non-strict inequality called generalized Aubin's inequality always holds. The possible non-solvability of the orbifold Yamabe problem when Aubin's inequality is saturated is the only essential difference from the ordinary Yamabe problem on manifolds. As long as the Yamabe problem is solvable, the formula (\ref{inflation}) holds for orbifolds as well.

Using the following topological invariants to be explained later
$$\chi_{orb}(M)=\chi(M)-\sum_i(1-\frac{1}{|\Gamma_i|}),\ \ \ \ \ \tau_{orb}(M)=\tau(M)+\sum_i\eta(S^3/\Gamma_i),$$ 
and $\pi_1^{orb}(M)$ which is the deck transformation group of the so-called \textit{universal orbifold covering space},
the inequalities of the above theorems are extended to a 4-orbifold as follows :
\begin{Theorem}\label{th3}
Let $(M,g)$ be a smooth closed oriented Riemannian 4-orbifold with orbifold groups $\Gamma_0,\cdots,\Gamma_m$. Suppose that $Y(M,[g])> 0$.
\begin{description}
  \item[(i)] If $|\pi_1^{orb}(M)|< \infty$, then $$\mathcal{W}_+(M,[g])\geq 2\pi^2(2\chi_{orb}(M)+3\tau_{orb}(M))-\min_{j}\frac{8\pi^2}{|\pi_1^{orb}(M)||\tilde{\Gamma}_j|}$$ where $\tilde{\Gamma}_j$ are orbifold groups of the universal orbifold covering space of $M$.
  \item[(ii)] If $|\pi_1(M)|< \infty$, then $$\mathcal{W}_+(M,[g])\geq 2\pi^2(2\chi_{orb}(M)+3\tau_{orb}(M))-\min_{j}\frac{8\pi^2}{|\pi_1(M)||\Gamma_j|}.$$
  \item[(iii)] If  $|H_1(M;\Bbb Z)|<\infty$, then  $$\mathcal{W}_+(M,[g])\geq 2\pi^2(2\chi_{orb}(M)+3\tau_{orb}(M))-\min_{j}\frac{8\pi^2}{|H_1(M;\Bbb Z)||\Gamma_j|}.$$
  \item[(iv)] If $\pi_1(M)$ or $\pi_1^{orb}(M)$ contains a subgroup of arbitrarily large finite index, then  $$\mathcal{W}_+(M,[g])\geq 2\pi^2(2\chi_{orb}(M)+3\tau_{orb}(M)).$$
  \item[(v)] If $b_2^+(M)>0$, then $$\mathcal{W}_+(M,[g])\geq \frac{4\pi^2}{3}(2\chi_{orb}(M)+3\tau_{orb}(M))$$ where the equality  holds iff $g$ is conformal to an orbifold  K\"ahler-Einstein metric.
\item[(vi)] If $\delta_gW_g^+=0$ for nonzero $W_g^+$, then $$\mathcal{W}_+(M,[g])\geq \frac{4\pi^2}{3}(2\chi_{orb}(M)+3\tau_{orb}(M))$$ where the equality holds iff $(M,g)$ is Einstein which is either K\"ahler or the quotient of a K\"ahler orbifold by a free anti-holomorphic isometric involution.
\end{description}
\end{Theorem}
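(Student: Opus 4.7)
The plan is to build everything from the identity obtained by combining the orbifold Chern--Gauss--Bonnet and Hirzebruch signature theorems (both available on smooth closed $4$-orbifolds by Satake), namely
\begin{equation*}
2\chi_{orb}(M)+3\tau_{orb}(M)=\frac{1}{4\pi^{2}}\int_{M}\!\Bigl(2|W_{g}^{+}|^{2}+\frac{s_{g}^{2}}{24}-\frac{|\stackrel{\circ}{r}_{g}|^{2}}{2}\Bigr)d\mu_{g},
\end{equation*}
which rearranges to
\begin{equation*}
\mathcal{W}_{+}(M,[g])=2\pi^{2}(2\chi_{orb}(M)+3\tau_{orb}(M))-\tfrac{1}{48}\!\int_{M}\!s_{g}^{2}\,d\mu_{g}+\tfrac{1}{4}\!\int_{M}\!|\stackrel{\circ}{r}_{g}|^{2}\,d\mu_{g}.
\end{equation*}
Together with conformal invariance of $\mathcal{W}_{+}$, this single identity is the engine for (i)--(iv); parts (v) and (vi) will require in addition a Bochner-type argument on the orbifold.

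For (i)--(iii) I would pass to the appropriate regular orbifold covering $\tilde{M}\to M$ of finite degree $k$: the universal orbifold cover for (i), the topological universal cover for (ii), and the maximal abelian cover for (iii). Under such a covering the quantities $\mathcal{W}_{+}$, $\chi_{orb}$ and $\tau_{orb}$ all multiply by $k$; the orbifold groups of $\tilde M$ are the $\tilde{\Gamma}_{j}$ in case (i), whereas in (ii)--(iii) they remain the original $\Gamma_{j}$ because each local model $\Bbb R^{4}/\Gamma_{j}$ is topologically simply connected and hence the cover splits locally into $k$ disjoint copies near every orbifold point. Applying the identity above on $\tilde{M}$, dropping the non-negative traceless Ricci term, and invoking (\ref{inflation}) to convert $\inf_{[\tilde{g}]}\int s_{\tilde{g}}^{2}$ into $Y(\tilde{M},[\tilde{g}])^{2}$, I would then bound the latter by the generalized Aubin inequality (\ref{Ahncheolsoo}): $Y(\tilde{M},[\tilde{g}])^{2}\leq Y(S^{4})^{2}/|\tilde{\Gamma}_{j}|=384\pi^{2}/|\tilde{\Gamma}_{j}|$ for every $j$. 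Dividing through by $k$ and using $Y(S^{4})^{2}/48=8\pi^{2}$ yields the stated inequality after minimizing over $j$. Part (iv) follows by applying (ii) or (iii) to a sequence of finite-index subgroups with $k\to\infty$, so that the correction term tends to zero.

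For (v) and (vi) I would transplant Gursky's proofs into the orbifold category. Under the hypothesis of (v), Satake's orbifold Hodge theorem yields a nonzero self-dual harmonic $2$-form $\omega$ which is smooth on the regular part and bounded on all of $M$; since the singular set has real codimension four, the Bochner--Weitzenb\"ock identity, integration by parts, and the Sobolev inequalities underlying the conformal Laplacian carry over from the manifold case with no boundary contributions from the cone points. Combining the Bochner identity for $\omega$, the pointwise estimate $W_{g}^{+}(\omega,\omega)\leq\sqrt{2/3}\,|W_{g}^{+}||\omega|^{2}$, the refined Kato inequality $|\nabla\omega|^{2}\geq\tfrac{3}{2}|d|\omega||^{2}$, and a conformal test-function argument then reproduces Gursky's bound with its equality case, now in the orbifold setting. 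Part (vi) runs on the same engine with $W_{g}^{+}$ itself playing the role of $\omega$: the hypothesis $\delta_{g}W_{g}^{+}=0$ is precisely the harmonicity condition needed to apply the Bochner argument, and the rigidity statement is unchanged except for allowing the K\"ahler model to be an orbifold.

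The main obstacle, in my view, is not any new geometric inequality but the careful verification that at every step the analytic machinery---orbifold $L^{2}$ Hodge theory, Bochner identities, Obata-type rigidity, the Yamabe minimization behind (\ref{inflation}), and the generalized Aubin comparison---retains full force on orbifolds whose singular set is a finite collection of isolated points, and that the multiplicativity of $\chi_{orb}$, $\tau_{orb}$ and $\mathcal{W}_{+}$ under finite orbifold coverings holds on the nose. The works of Baily, Satake and Akutagawa cited in the introduction supply most of these ingredients, so what is left is mostly book-keeping rather than new estimates.
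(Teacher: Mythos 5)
Your skeleton coincides with the paper's: the same combined Gauss--Bonnet/signature identity, the same finite-covering trick for (i)--(iv), and orbifold versions of Gursky's comparison $Y(M,[g])\leq 2\sqrt{6}\,\mathcal{W}_+(M,[g])^{1/2}$ feeding into (v)--(vi). But there is a genuine gap at precisely the step you dismiss as book-keeping: converting $\inf_{\hat g\in[g]}\int s_{\hat g}^2\,d\mu_{\hat g}$ into $Y(\,\cdot\,)^2$ via (\ref{inflation}). The H\"older inequality only gives $Y\leq(\inf\int s^2)^{1/2}$; the direction your argument actually uses, $\inf\int s^2\leq Y^2$, comes from evaluating at a Yamabe minimizer, i.e.\ it requires the orbifold Yamabe problem to be \emph{solvable}. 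On orbifolds solvability is guaranteed only under the \emph{strict} generalized Aubin inequality (\ref{Ahncheolsoo}) and may fail when that inequality is saturated --- and nothing in the hypotheses of Theorem \ref{th3} rules out saturation, either on $M$ or on the covers $\tilde M$ you pass to. Without it, the chain $48kd\leq\inf\int s_{\tilde h}^2\leq Y(\tilde M,[\pi^*g])^2\leq Y(S^4)^2/|\tilde\Gamma_j|$ breaks at the middle link, and likewise the identity $\int s_{\check g}^2\,d\mu_{\check g}=Y(M,[g])^2$ for a Yamabe metric, which you need in (v)--(vi), is unavailable.

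The paper spends most of its Section 3 closing exactly this gap. It proves an approximation lemma (Lemma \ref{IKim}) flattening $g$ conformally-flatly near each orbifold point while changing $Y$ and $\mathcal{W}_+$ by less than $\epsilon$, then glues in slightly perturbed spherical caps --- using Madani's resolution of the Hebey--Vaugon conjecture in dimension $4$ to force the strict inequality $Y<Y(S^4)/\sqrt{|\Gamma_j|}$ --- across long cylindrical necks that keep the Yamabe constant positive. The resulting metrics $g_\epsilon$ satisfy the strict Aubin inequality, and so do their pullbacks to the covers (checked by pulling back the compactly supported test functions $\varphi_j$), so the Yamabe problem is solvable there and the argument runs, with $\epsilon\to 0$ at the end. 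A convergence theorem for minimizing sequences (Theorem \ref{yam}) is also needed to settle the equality cases of (v) and (vi), where the paper additionally sidesteps Gursky's modified-scalar-curvature machinery by a direct regularized test function $u_\epsilon=(|\psi|^2+\epsilon)^{1/4}$. As written, your proposal proves the theorem only for conformal classes (and covers) satisfying the strict generalized Aubin inequality; you must either supply an approximation of this kind or add that hypothesis.
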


We shall provide examples where our inequalities give sharper lower bounds of $\mathcal{W}_+$ and discuss some applications such as nonexistence of self-dual metrics of positive scalar curvature.

\section{Proof of Theorem \ref{th2}}
Our proof is based on the well-known formula
\begin{eqnarray}\label{Choomiae}
2\pi^2 (2\chi(M)+3\tau(M))=\int_M (|W^+_{g}|^2+\frac{s_{g}^2}{48}-\frac{|\stackrel{\circ}r_{g}|^2}{4})\
d\mu_{g}
\end{eqnarray}
which is the combination of the Chern-Gauss-Bonnet formula and the Hirzebruch signature formula.
\begin{Lemma}\label{Carol}
Let $(M,g)$ be a smooth closed oriented Riemannian 4-manifold with a $k$-fold covering $p:\tilde{M}\rightarrow M$. If $Y(M,[g])\geq 0$, then
$$2\pi^2 (2\chi(M)+3\tau(M))-
\int_M |W^+_{g}|^2d\mu_{g}\leq \frac{8\pi^2}{k},$$ where the equality holds iff $k=1$ and $(M,g)$ is conformal to a round 4-sphere.
\end{Lemma}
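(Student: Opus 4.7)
The plan is to apply formula~(\ref{Choomiae}) on the $k$-fold cover $\tilde M$, combine it with identity~(\ref{inflation}) applied to a Yamabe metric on $\tilde M$, and conclude via Aubin's sharp inequality $Y(\tilde M,[p^*g])\le Y(S^4)=8\pi\sqrt{6}$. The conceptual point is that Aubin should be invoked \emph{upstairs} on $\tilde M$ rather than downstairs on $M$: passing to the cover multiplies the left-hand side by $k$ while leaving the ceiling $Y(S^4)^2=384\pi^2$ unchanged, which is exactly what turns the crude bound $8\pi^2$ into the sharp $8\pi^2/k$.

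Concretely, I would first set $\tilde g=p^*g$. Since $\chi$, $\tau$, and the conformally invariant quantity $\int|W^+|^2\,d\mu$ all multiply by $k$ under the covering, identity~(\ref{Choomiae}) applied to $\tilde M$ with any representative $\tilde g_0\in[\tilde g]$ yields
$$k\Bigl[2\pi^2(2\chi(M)+3\tau(M))-\int_M|W^+_g|^2\,d\mu_g\Bigr]=\int_{\tilde M}\Bigl(\tfrac{s_{\tilde g_0}^2}{48}-\tfrac{|\stackrel{\circ}r_{\tilde g_0}|^2}{4}\Bigr)d\mu_{\tilde g_0}.$$
It therefore suffices to bound the right-hand integral by $8\pi^2$.

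Next, because $Y(M,[g])\ge 0$, pulling back a Yamabe metric of $[g]$ produces a nonnegative-scalar-curvature metric in $[\tilde g]$, so $Y(\tilde M,[\tilde g])\ge 0$ and a Yamabe metric $\tilde g_0\in[\tilde g]$ exists with $s_{\tilde g_0}$ constant. Discarding the nonpositive term $-\tfrac{1}{4}|\stackrel{\circ}r_{\tilde g_0}|^2$ and using the dimension-four form $Y(\tilde M,[\tilde g])^2=\int_{\tilde M}s_{\tilde g_0}^2\,d\mu_{\tilde g_0}$ of~(\ref{inflation}), the right-hand side is at most $\tfrac{1}{48}Y(\tilde M,[\tilde g])^2$, which by Aubin's sharp bound $Y(\tilde M,[\tilde g])\le Y(S^4)$ is at most $\tfrac{1}{48}\cdot 384\pi^2=8\pi^2$.

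For equality one needs simultaneously $\stackrel{\circ}r_{\tilde g_0}\equiv 0$ and $Y(\tilde M,[\tilde g])=Y(S^4)$ realised by $\tilde g_0$; the Obata-type rigidity inside Aubin's sharp inequality then forces $(\tilde M,\tilde g_0)$ to be the round $S^4$ up to homothety, whence $k=1$ and $(M,g)$ itself is conformal to a round $S^4$. The hard part of this argument is really only the strategic decision to lift to $\tilde M$ before invoking Aubin; once that is done, the estimates are routine, and the hypothesis $Y(M,[g])\ge 0$ enters precisely to ensure that a Yamabe metric on $\tilde M$ exists and that its squared Yamabe constant is controlled by $Y(S^4)^2$.
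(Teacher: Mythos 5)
Your argument is correct and follows essentially the same route as the paper: lift to the $k$-fold cover, apply the Gauss--Bonnet/signature identity (\ref{Choomiae}) there, discard the trace-free Ricci term, and bound $\int_{\tilde M}s^2$ by $Y(\tilde M,[p^*g])^2\leq Y(S^4)^2$ via (\ref{inflation}) and Aubin; the paper merely phrases the middle step as an infimum over all of $[p^*g]$ rather than evaluating at a chosen Yamabe metric. The only detail you elide is why ``$(\tilde M,p^*g)$ is a round $S^4$'' forces $k=1$: one must still rule out a nontrivial oriented quotient of $S^4$, which the paper does via the Lefschetz fixed point theorem (no free orientation-preserving action on $S^4$).
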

\begin{proof}
Put
\begin{eqnarray*}
d&:=&2\pi^2 (2\chi(M)+3\tau(M))-
\int_M |W^+_{g}|^2d\mu_{g}.
\end{eqnarray*}
By (\ref{Choomiae})
$$d=\int_M
(\frac{s_{g}^2}{48}-\frac{|\stackrel{\circ}r_{g}|^2}{4})\
d\mu_{g},$$ and hence
$$ \int_{\tilde{M}}
(\frac{s_{p^*g}^2}{48}-\frac{|\stackrel{\circ}r_{p^*g}|^2}{4})\
d\mu_{p^*g}=kd.$$ For any metric $h\in [p^*g]$ and the pullback orientation (or its reversed one) on $\tilde{M}$,
\begin{eqnarray*}
\int_{\tilde{M}}
(\frac{s_{h}^2}{48}-\frac{|\stackrel{\circ}r_{h}|^2}{4})\
d\mu_{h}&=& 2\pi^2 (2\chi(\tilde{M})+3\tau(\tilde{M}))-
\int_{\tilde{M}} |W^+_{h}|^2d\mu_{h}\\  &=& 2\pi^2 (2\chi(\tilde{M})+3\tau(\tilde{M}))-
\int_{\tilde{M}} |W^+_{p^*g}|^2d\mu_{p^*g}\\ &=& \int_{\tilde{M}}
(\frac{s_{p^*g}^2}{48}-\frac{|\stackrel{\circ}r_{p^*g}|^2}{4})\
d\mu_{p^*g}\\ &=&  kd.
\end{eqnarray*}
Thus if $d\geq 0$, then
 $$\inf_{h\in [p^*g]} (\int_{\tilde{M}}
s_{h}^2d\mu_{h})^{\frac{1}{2}}\geq 4\sqrt{3kd}.$$
Since $Y(\tilde{M},[p^*g])\geq 0$, $$Y(\tilde{M},[p^*g])=\inf_{h\in [p^*g]} (\int_{\tilde{M}}
s_{h}^2d\mu_{h})^{\frac{1}{2}}$$ by the formula (\ref{inflation}).

Therefore we conclude that $$d\leq \frac{Y(\tilde{M},[p^*g])^2}{48k}\leq  \frac{8\pi^2}{k},$$
where we used Aubin's inequality $Y(\tilde{M},[p^*g])\leq Y(S^4)=8\sqrt{6}\pi$.

The equality holds iff a Yamabe metric of $(\tilde{M},[p^*g])$ is Einstein with Yamabe constant equal to $Y(S^4)$. Recall that the only conformal class with its Yamabe constant equal to $Y(S^4)$ is that of a round metric on $S^4$. Thus the equality holds iff $k$ is equal to $|\pi_1(M)|$, and $(\tilde{M},p^*g)$ is conformal to $S^4$ with a round metric. By the Lefschetz fixed point theorem, any orientation-preserving diffeomorphism of $S^4$ has at least 2 fixed points. Therefore the only orientable manifold covered by $S^4$ is $S^4$ itself.
\end{proof}

Now if $|\pi_1(M)|< \infty$, then one can apply the above lemma with  $k=|\pi_1(M)|$ and $\tilde{M}$ equal to the universal covering space to get the desired inequality. Then the equality is attained iff $(M,g)$ is conformal to a round 4-sphere.

It remains to deal with $|\pi_1(M)|= \infty$ cases. If there exists a subgroup of $\pi_1(M)$ with index $k$, then there exists a $k$-fold covering space of $M$ and the above lemma applies.

To find a subgroup of finite index, we try to find a surjective homomorphism from $\pi_1(M)$ onto a finite group $G$.
The first obvious try is  the obvious quotient homomorphism  $\psi: \pi_1(M)\rightarrow H_1(M;\Bbb Z)$.
If $b_1(M)=0$, then $\ker\psi$ gives a (normal) subgroup of index $|H_1(M;\Bbb Z)|$. If $b_1(M)>0$, then we take a surjective homomorphism $\tilde{\psi}_k: H_1(M;\Bbb Z)\rightarrow \Bbb Z_k$ to get  a (normal) subgroup  $\ker(\tilde{\psi}_k\circ\psi)$  of index $k$ for any integer $k>0$. In general, if there exists a subgroup of $\pi_1(M)$ with index bigger than $m$ for any integer $m$, one can achieve $$2\pi^2 (2\chi(M)+3\tau(M))-\int_M |W^+_{g}|^2d\mu_{g}\leq 0.$$ This completes the proof.



\begin{Remark}
Here the condition of (iii) is satisfied, when $b_1(M)>0$, or more generally $\pi_1(M)$ is infinite but residually finite, i.e. for each non-identity element in the group, there is a normal subgroup of finite index not containing that element. By Malcev's theorem \cite{mal}, if $\pi_1(M)$ has a faithful representation on a finite dimensional vector space over a field, it is residually finite.

From the above proof, it's obvious that we can obtain a lower bound of $\mathcal{W}_+(M,[g])$ better than that of the inequality (i), if $|\pi_1(M)|<\infty$ and $Y(\tilde{M})$ of the universal cover $\tilde{M}$ of $M$ is strictly smaller than $Y(S^4)$.
\end{Remark}

\begin{Remark}
If $Y(M^4,[g])=0$, then from (\ref{Choomiae}) we still have
\begin{eqnarray}\label{ggg}
\mathcal{W}_+(M,[g])\geq 2\pi^2(2\chi(M)+3\tau(M)),
\end{eqnarray}
where the equality holds iff $g$ is conformal to a Ricci-flat metric.
\end{Remark}

\section{Proof of Theorem \ref{th3}}

Unlike the manifold case, there is a complication in the orbifold case, due to the possible non-solvability of the Yamabe problem. To bypass this difficulty, we need to approximate the given metric $g$ by a metric in which the Yamabe problem is solvable. This will constitute a large portion of this section.
In this paper $g_{_{\Bbb E^n}}$ and $g_{_{\Bbb S^n}}$ respectively denote the Euclidean metric on $\Bbb R^n$ and the standard round metric of constant curvature 1 on $S^n$ and its quotient.
Before entering into the proof, we need to review some basic facts in the Yamabe problem and orbifolds.

\subsection{Convergence of minimizing sequence}

A sequence $\{\varphi_i\in L_1^2(M)|i\in \Bbb N\}$ is called a minimizing sequence for $Y_g$ if $$\lim_{i\rightarrow \infty}Y_g(\varphi_i)= Y(M,[g]),$$ and  $\varphi\in L_1^2(M)$ is also called a Yamabe minimizer for $Y_g$ if $Y_g(\varphi)=Y(M,[g])$.
\begin{Theorem}\label{yam}
On a smooth closed Riemannian $n$-orbifold $(M,g)$ for $n\geq 3$, let $\{\varphi_i\in L_1^2(M)|i\in \Bbb N\}$ be a minimizing sequence for $Y_g$ such that $\lim_{i\rightarrow \infty}||\varphi_i||_{_{L^{^{p_n}}}}$ exists and is nonzero. If
there exists $\tilde{\varphi}\in L^{p_n}(M)$ such that $|\varphi_i|\leq \tilde{\varphi}$ for all $i$, then there exists a Yamabe minimizer $\varphi$ to which a subsequence of $\{\varphi_i\}$ converges in $L_1^2$-norm.
\end{Theorem}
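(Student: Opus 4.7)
The plan is to run the direct method of the calculus of variations: extract a weak $L^2_1$-limit, then promote it to a strong $L^{p_n}$-limit using the $L^{p_n}$ envelope $\tilde\varphi$, and finally check that the limit attains the infimum. The role of the hypothesis $|\varphi_i|\leq \tilde\varphi \in L^{p_n}$ is precisely to prevent the concentration (``orbifold bubble'') phenomenon which is the only obstruction to solving the Yamabe problem on an orbifold when Aubin's inequality is saturated.

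First I would homogenize the sequence. Since $Y_g$ is invariant under positive scaling and does not increase under $\varphi_i\mapsto |\varphi_i|$, normalize so that $\|\varphi_i\|_{L^{p_n}}$ is constant and $\varphi_i\geq 0$. Because $\{Y_g(\varphi_i)\}$ is bounded and $s_g$ is bounded on $M$, the numerator $\int_M(a_n|d\varphi_i|^2+s_g\varphi_i^2)\,d\mu_g$ is bounded, so $\{\varphi_i\}$ is bounded in $L^2_1(M)$. By reflexivity, a subsequence converges weakly in $L^2_1$ to some $\varphi\in L^2_1(M)$. The Rellich--Kondrakov compactness theorem on a smooth closed orbifold (which follows by covering $M$ by finitely many local uniformizing charts and applying the equivariant Euclidean version on each) then yields a further subsequence converging strongly in $L^2$, and by extracting once more we may assume pointwise almost-everywhere convergence $\varphi_i\to\varphi$.

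The decisive step is the one where the hypothesis is actually used. Because $|\varphi_i-\varphi|^{p_n}\leq (2\tilde\varphi)^{p_n}\in L^1(M)$ pointwise a.e., the dominated convergence theorem upgrades pointwise convergence to strong $L^{p_n}$ convergence $\varphi_i\to\varphi$. In particular $\|\varphi\|_{L^{p_n}}=\lim\|\varphi_i\|_{L^{p_n}}>0$, so $\varphi\not\equiv 0$. Combining strong $L^{p_n}$ convergence of the denominator, strong $L^2$ convergence of the $s_g\varphi_i^2$ term, and weak $L^2_1$ lower semicontinuity of $\int_M|d\varphi_i|^2\,d\mu_g$, one obtains
\[
Y_g(\varphi)\;\leq\;\liminf_{i\to\infty}Y_g(\varphi_i)\;=\;Y(M,[g]),
\]
so $\varphi$ is a Yamabe minimizer. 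Equality in this chain forces $\int_M|d\varphi_i|^2\,d\mu_g\to\int_M|d\varphi|^2\,d\mu_g$, which together with weak convergence promotes the subsequence to strong $L^2_1$ convergence.

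The main obstacle, which is built into the statement rather than encountered in the proof, is the orbifold bubbling: without a uniform $L^{p_n}$-envelope, a minimizing sequence can concentrate its $|\varphi_i|^{p_n}d\mu_g$ mass onto a Dirac delta at an orbifold-singular point, and the weak $L^2_1$-limit can be zero. The pointwise bound $|\varphi_i|\leq\tilde\varphi\in L^{p_n}$ is exactly the hypothesis that rules out such concentration and makes the dominated-convergence step work; beyond that, the argument follows the standard Yamabe template.
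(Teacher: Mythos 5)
Your argument is correct and follows essentially the same route as the paper's proof: boundedness in $L^2_1$ from the bounded numerator, weak $L^2_1$ extraction plus Rellich--Kondrakov on the orbifold, dominated convergence via the envelope $\tilde{\varphi}$ for the $L^{p_n}$-norms and the $s_g\varphi_i^2$ term, weak lower semicontinuity of the gradient energy, and the weak-convergence-plus-norm-convergence trick to upgrade to strong $L^2_1$ convergence. The only (harmless) deviation is the preliminary replacement $\varphi_i\mapsto|\varphi_i|$, which is never used afterwards and, taken literally, would give convergence of a subsequence of $\{|\varphi_i|\}$ rather than of $\{\varphi_i\}$ as the statement requires; simply omit it.
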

\begin{proof}
Set $a:=a_n$ and $p:=p_n$.
We may assume that $$\int_M|\varphi_i|^{p}d\mu_g=1$$ for all $i\in \Bbb N$ by considering $\frac{\varphi_i}{||\varphi_i||_{_{L^{p}}}}$.

From
\begin{eqnarray}\label{jongguk-0}
\lim_{i\rightarrow \infty}\int_M(a|d\varphi_i|^2+s_g\varphi_i^2)\ d\mu_g&=&Y(M,[g]),
\end{eqnarray}
there exists an integer $n_0$ such that if $i\geq n_0$ then
\begin{eqnarray*}
Y(M,[g])+1 &\geq& \int_M(a|d\varphi_i|^2+s_g\varphi_i^2)\ d\mu_g\\ &\geq& \int_M(a|d\varphi_i|^2-|\min_Ms_g|\ \varphi_i^2)\ d\mu_g\\ &\geq& \int_Ma|d\varphi_i|^2 d\mu_g-C(\int_M|\varphi_i|^p\ d\mu_g)^{\frac{2}{p}}
\end{eqnarray*}
for a constant $C>0$, implying that $$\sup_i\int_M|d\varphi_i|^2d\mu_g\leq C'$$ for a constant $C'>0$, and hence $\{\varphi_i|i\in \Bbb N\}$ is a bounded subset of $L_1^2(M)$.

Thus there exists $\varphi\in L_1^2(M)$ and a subsequence converging to $\varphi$ weakly in $L_1^2$, strongly in $L^2$, and pointwisely almost everywhere.\footnote{For the strong convergence in $L^2$, we used the Rellich-Kondrakov theorem which holds still on orbifolds. It can be easily derived by using the partition of unity. For a proof one may refer to \cite{Falsi}.} By abuse of notation we let $\{\varphi_i\}$ be the subsequence.

Owing to that $|\varphi_i|\leq \tilde{\varphi}\in L^p(M)$, we can apply Lebesgue's dominated convergence theorem to obtain
\begin{eqnarray}\label{jongguk-1}
\int_Ms_g\varphi_i^2\ d\mu_g\rightarrow \int_Ms_g\varphi^2d\mu_g
\end{eqnarray}
 and
\begin{eqnarray}\label{jongguk-2}
\int_M|\varphi_i|^p d\mu_g\rightarrow \int_M|\varphi|^pd\mu_g,
\end{eqnarray}
 and hence $\int_M|\varphi|^pd\mu_g=1$ and there must exist $\lim_{i\rightarrow \infty}\int_M|d\varphi_i|^2d\mu_g$ by (\ref{jongguk-0}).

By the weak convergence $\varphi_i\rightarrow\varphi$ in $L_1^2$,
$$\int_M|d\varphi|^2d\mu_g+\int_M\varphi^2\ d\mu_g = \lim_{i\rightarrow \infty}(\int_M\langle d\varphi_i,d\varphi\rangle d\mu_g+\int_M \varphi_i\varphi\  d\mu_g),$$ so
\begin{eqnarray}\label{jongguk-3}
\int_M|d\varphi|^2d\mu_g &=& \lim_{i\rightarrow \infty}\int_M\langle d\varphi_i,d\varphi\rangle d\mu_g\\ &\leq& \limsup_{i\rightarrow \infty}(\int_M|d\varphi_i|^2d\mu_g    )^{\frac{1}{2}}(\int_M|d\varphi|^2d\mu_g)^{\frac{1}{2}}\nonumber\\ &=& \lim_{i\rightarrow \infty}(\int_M|d\varphi_i|^2d\mu_g    )^{\frac{1}{2}}(\int_M|d\varphi|^2d\mu_g)^{\frac{1}{2}}\nonumber
\end{eqnarray}
implying that
$$\int_M|d\varphi|^2d\mu_g \leq \lim_{i\rightarrow \infty}\int_M|d\varphi_i|^2d\mu_g.$$

Therefore
$$Y(M,[g])=\lim_{i\rightarrow \infty}\frac{\int_M(a|d\varphi_i|^2+s_g\varphi_i^2)\ d\mu_g}{(\int_M|\varphi_i|^p\ d\mu_g)^{\frac{2}{p}}}\geq
\frac{\int_M(a|d\varphi|^2+s_g\varphi^2)\ d\mu_g}{(\int_M|\varphi|^p\ d\mu_g)^{\frac{2}{p}}},$$ and hence $\varphi$ must be a Yamabe minimizer.

By (\ref{jongguk-1}) and (\ref{jongguk-2}), this implies that $$\lim_{i\rightarrow \infty}\int_M|d\varphi_i|^2d\mu_g=\int_M|d\varphi|^2d\mu_g.$$
Combined with (\ref{jongguk-3}), it gives
\begin{eqnarray*}
\lim_{i\rightarrow \infty}\int_M|d\varphi-d\varphi_i|^2d\mu_g&=&\lim_{i\rightarrow \infty}\int_M(|d\varphi|^2-2\langle d\varphi_i,d\varphi\rangle+|d\varphi_i|^2)\ d\mu_g=0,
\end{eqnarray*}
completing the proof that $\varphi_i\rightarrow\varphi$ strongly in $L_1^2$.

\end{proof}

\subsection{Equivariant Yamabe problem}

Suppose a compact Lie group $G$ acts on a smooth closed Riemannian $n$-manifold $(X,g)$ for $n\geq 3$ smoothly and isometrically. We define $[g]_G$ to be the set of all smooth $G$-invariant metrics conformal to $g$, and $$Y(X,[g]_G):=\inf_{\hat{g}\in [g]_G}\frac{\int_X s_{\hat{g}}\
d\mu_{\hat{g}}}{(\int_X d\mu_{\hat{g}})^{\frac{n-2}{n}}}.$$

According to Hebey and Vaugon \cite{HV},
$$Y(X,[g]_G)\leq Y(S^n) (\inf_{x\in M}|Gx|)^{\frac{2}{n}}$$ where $|Gx|$ denotes the cardinality of the orbit of $x$, and if the strict inequality holds, the equivariant Yamabe problem is solvable, i.e. $Y(X,[g]_G)$ is always achieved by a $G$-invariant metric of constant scalar curvature in $[g]_G$. They also conjectured that the strict inequality holds if $(X,g)$ is not conformal to $(S^n,g_{_{\Bbb S^n}})$ or if the $G$-action has no fixed point. Madani \cite{madani} resolved the conjecture for $n\leq 37$.

In case that $G$ is finite and the quotient space $X/G$ is an orbifold, the equivariant Yamabe problem on $X$ is equivalent to the orbifold Yamabe problem on $(X/G, \hat{g})$ where $\hat{g}$ is the orbifold metric induced by the $G$-invariant metric $g$ and $Y(X/G,[\hat{g}])$ is equal to $\frac{Y(X,[g]_G)}{|G|^{\frac{2}{n}}}$.

The interested reader might consult \cite{sung} for some nontrivial computations of equivariant Yamabe invariants.

\subsection{Topology of orbifolds}
For a detailed explanation on a general $n$-orbifold, the reader might consult \cite{Thurston, SChoi}.

A local (uniformizing) chart of an $n$-orbifold is given by $U\subset \Bbb R^n$ modulo a finite group $\Gamma < SO(n)$, and a smooth orbifold map $f:U_1/\Gamma_1\rightarrow U_2/\Gamma_2$ seen on local charts is a continuous map with a smooth lifting $\tilde{f}: U_1\rightarrow U_2$ and a homomorphism $\gamma:\Gamma_1\rightarrow \Gamma_2$ such that $$\tilde{f}(g\cdot x)=\gamma(g)\cdot \tilde{f}(x)$$ for all $g\in \Gamma_1$ and $x\in U_1$. For example of a smooth orbifold map, a smooth real-valued function on $U/\Gamma$ is a $\Gamma$-invariant smooth real-valued function on $U$.
We always assume that any orbifold map is smooth and an orbifold diffeomorphism is a smooth orbfold map whose inverse is also a smooth orbifold map.

As another important example of an orbifold map, an orbifold covering $\pi:\check{M}\rightarrow M$ between two orbifolds of the same dimension is an ordinary covering over the complement of the orbifold points and each orbifold point $x\in M$ has a neighborhood $N(x)$ with local uniformizing chart $U/\Gamma$ such that each component $\mathcal{U}_i$ of $\pi^{-1}(N(x))$ is diffeomorphic to $U/\check{\Gamma}$ for a subgroup $\check{\Gamma}\leq \Gamma$ and $\pi|_{\mathcal{U}_i}$ is the natural projection $U/\check{\Gamma}\rightarrow U/\Gamma$. Let us call such neighborhood $N(x)$ an \textit{evenly covered neighborhood} of $x$, and an orbifold covering is called $k$-fold or $k$-sheeted if a generic fiber consists of $k$ points.

From the definition an ordinary covering of an oribifold is also a special kind of orbifold coverings.
Two covering orbifolds $(\check{M}_1,\pi_1)$ and $(\check{M}_2,\pi_2)$ of $M$ are isomorphic if there is a fiber-preserving orbifold diffeomorphism. A covering automorphism or deck transformation is a covering isomorphism of itself.

Every orbifold $M$ has an universal orbifold covering $\pi:\tilde{M}\rightarrow M$ in the sense that for any orbifold covering $p:\check{M} \rightarrow M$ there is $\pi':\tilde{M}\rightarrow \check{M}$ such that $\pi=p\circ\pi'$. It is unique up to covering isomorphism.
The orbifold fundamental group $\pi_1^{orb}(M)$ of an orbifold is then defined as the group of deck transformations of the universal orbifold cover $\tilde{M}$, and
every orbifold covering space of $M$ is isomorphic to $\tilde{M}/\Gamma\rightarrow M$ for a subgroup $\Gamma$ of $\pi_1^{orb}(M)$. In fact, the isomorphism classes of orbifold covering spaces of $M$ are in 1-1 correspondence with the conjugacy classes of subgroups of $\pi_1^{orb}(M)$.

Now let's consider a smooth closed oriented Riemannian $4$-orbifold $(M,g)$. Its orbifold Euler characteristic and orbifold signature are defined as
$$\chi_{orb}(M):=\frac{1}{8\pi^2} \int_M (|W^+_{g}|^2+|W^-_{g}|^2+\frac{s_{g}^2}{24}-\frac{|\stackrel{\circ}r_{g}|^2}{2})\
d\mu_{g}$$ and
\begin{eqnarray}\label{knuefaculty}
\tau_{orb}(M):=\frac{1}{12\pi^2}(\mathcal{W}_+(M,[g])-\mathcal{W}_-(M,[g]))
\end{eqnarray}
respectively as in the Chern-Gauss-Bonnet formula and the Hirzebruch signature formula for manifolds.
As in manifolds, they are topological invariants given by
$$\chi_{orb}(M)=\chi(M)-\sum_i(1-\frac{1}{|\Gamma_i|}),\ \ \ \ \ \tau_{orb}(M)=\tau(M)+\sum_i\eta(S^3/\Gamma_i)$$ where the summation is over all the orbifold points $p_i$ and $\eta(S^3/\Gamma_i)$ is the eta invariant of $S^3/\Gamma_i$ with the standard metric.(\cite{Hit, sat, viaclo3}) The 1st equality states that each orbifold point $p_i$ contributes to $\chi_{orb}$ by $\frac{1}{|\Gamma_i|}$ rather than 1 and the 2nd equality is just a restatement of the Atiyah-Patodi-Singer index theorem for the signature operator.  Note that for a $k$-fold orbifold-covering $\check{M}$ of $M$,
\begin{eqnarray}\label{Gan-YH}
\chi_{orb}(\check{M})=k\chi_{orb}(M),\ \ \ \ \ \   \tau_{orb}(\check{M})=k\tau_{orb}(M),
\end{eqnarray}
because any smooth orbifold metric on $M$ can be lifted to a smooth orbifold metric on $\check{M}$.

In the same way as the manifold case (\ref{ggg}), we readily have that
if $Y(M,[g])=0$, then
\begin{eqnarray}\label{ggg-1}
\mathcal{W}_+(M,[g])\geq 2\pi^2(2\chi_{orb}(M)+3\tau_{orb}(M))
\end{eqnarray}
where the equality holds iff $g$ is conformal to a Ricci-flat orbifold metric.

\subsection{Comparison of Yamabe constant and Weyl functional}

\begin{Theorem}\label{th4}
Let $(M,g)$ be a smooth closed oriented Riemannian 4-orbifold of $b_2^+(M)\geq 1$. Then $$Y(M,[g])\leq 2\sqrt{6}(\mathcal{W}_+(M,[g]))^{\frac{1}{2}},$$
and the equality holds iff $[g]$ contains a K\"ahler metric of nonnegative constant scalar curvature which is a Yamabe metric.
\end{Theorem}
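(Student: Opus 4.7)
The inequality is trivial when $Y(M,[g])\leq 0$ (since $\mathcal{W}_+\geq 0$), so I will assume $Y(M,[g])>0$ and adapt LeBrun's Weitzenb\"ock--Yamabe argument to the orbifold setting. The hypothesis $b_2^+(M)\geq 1$, combined with orbifold Hodge theory on a smooth closed orbifold, furnishes a nonzero harmonic self-dual $2$-form $\omega$ on $(M,g)$. On the smooth part of $M$, whose complement has measure zero, the Weitzenb\"ock formula
$$\nabla^*\nabla\omega=2W^+_g(\omega)-\frac{s_g}{3}\omega$$
holds; pairing with $\omega$ and integrating---permissible because integration by parts is a purely local computation on any uniformizing chart---gives
$$\int_M|\nabla\omega|^2\,d\mu_g+\frac{1}{3}\int_M s_g|\omega|^2\,d\mu_g=2\int_M\langle W^+_g(\omega),\omega\rangle\,d\mu_g.$$
The pointwise algebraic bound $\langle W^+_g(\omega),\omega\rangle\leq\sqrt{2/3}\,|W^+_g|\,|\omega|^2$, which follows from $W^+_g$ being trace-free symmetric on the rank-$3$ bundle $\Lambda^+$ so that its largest eigenvalue obeys $\lambda_{\max}\leq\sqrt{2/3}\,|W^+_g|$, then yields
$$3\int_M|\nabla\omega|^2\,d\mu_g+\int_M s_g|\omega|^2\,d\mu_g\leq 2\sqrt{6}\int_M|W^+_g|\,|\omega|^2\,d\mu_g.$$

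For the other half I feed $f:=|\omega|\in L_1^2(M)$ into the orbifold Yamabe inequality (which is just the variational definition of $Y(M,[g])$ recalled from Akutagawa's work); in dimension four this gives
$$Y(M,[g])\Bigl(\int_M|\omega|^4\,d\mu_g\Bigr)^{1/2}\leq 6\int_M\bigl|d|\omega|\bigr|^2\,d\mu_g+\int_M s_g|\omega|^2\,d\mu_g.$$
The refined Kato inequality $\bigl|d|\omega|\bigr|^2\leq\tfrac{1}{2}|\nabla\omega|^2$ for a harmonic self-dual $2$-form on a $4$-manifold, whose pointwise nature extends to the orbifold setting, bounds the first term on the right by $3\int_M|\nabla\omega|^2\,d\mu_g$; Cauchy--Schwarz then yields $\int_M|W^+_g|\,|\omega|^2\,d\mu_g\leq\mathcal{W}_+(M,[g])^{1/2}\bigl(\int_M|\omega|^4\,d\mu_g\bigr)^{1/2}$. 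Chaining the previous four displays and cancelling the common factor $\bigl(\int_M|\omega|^4\,d\mu_g\bigr)^{1/2}>0$ yields the claimed inequality.

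For equality one traces which of the four estimates must be saturated: the algebraic Weyl bound forces $\omega$ to be the top eigenvector of $W^+_g$ with the K\"ahler-type spectrum $(2\mu,-\mu,-\mu)$; refined Kato forces $\nabla\omega=\alpha\otimes\omega$ for some $1$-form $\alpha$, so $\omega$ is parallel after a suitable conformal change; Cauchy--Schwarz forces $|W^+_g|$ proportional to $|\omega|^2$; and the Yamabe inequality forces $|\omega|$ to be a Yamabe minimizer. Together these identify a conformal representative of $g$ as an orbifold K\"ahler metric of nonnegative constant scalar curvature that is a Yamabe metric, with $\omega$ a constant multiple of the K\"ahler form. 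Conversely, for any such K\"ahler metric a direct computation using $|W^+|^2=s_g^2/24$ and $Y(M,[g])^2=s_g^2\,\Vol(g)$ saturates the bound. The main technical wrinkle I anticipate is verifying that the K\"ahler structure in the equality case descends smoothly across the orbifold singular locus, which should follow from orbifold elliptic regularity for the Yamabe equation combined with the local $\Gamma$-invariance of the parallelism of $\omega$.
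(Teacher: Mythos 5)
Your overall strategy---the Weitzenb\"ock formula for a harmonic self-dual $2$-form, the algebraic bound $W^+(\omega,\omega)\leq\sqrt{2/3}\,|W^+||\omega|^2$, a refined Kato inequality, and a test function fed into the Yamabe functional followed by Cauchy--Schwarz---is exactly the paper's. But there is a genuine error at the Kato step that breaks the numerology. The refined Kato inequality for a harmonic self-dual $2$-form on a $4$-manifold is $|d|\omega||^2\leq\frac{2}{3}|\nabla\omega|^2$ (equivalently $\frac{3}{2}|d|\omega||^2\leq|\nabla\omega|^2$, the form the paper quotes from Calderbank--Gauduchon--Herzlich and Gursky--LeBrun), and $\frac{2}{3}$ is the sharp pointwise constant obtainable from $(d+d^*)\omega=0$ alone; your claimed constant $\frac{1}{2}$ is false. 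With the correct constant your chain gives only $6\int|d|\omega||^2\leq 4\int|\nabla\omega|^2$, whereas the integrated Weitzenb\"ock identity controls $3\int|\nabla\omega|^2+\int s|\omega|^2\leq 2\sqrt{6}\int|W^+||\omega|^2$; the leftover copy of $\int|\nabla\omega|^2$ enters with the wrong sign and cannot be absorbed, so the test function $f=|\omega|$ yields at best the non-sharp constant $\frac{8\sqrt{6}}{3}$ together with an uncontrolled scalar-curvature term, not $2\sqrt{6}$.

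The repair---and what the paper actually does---is to take $f=|\omega|^{1/2}$ rather than $|\omega|$: then $6|df|^2=\frac{3}{2}|\omega|^{-1}\,|d|\omega||^2\leq|\omega|^{-1}|\nabla\omega|^2$ matches the Kato constant exactly, the Weitzenb\"ock identity weighted by $|\omega|^{-1}$ (integrating the Laplacian term by parts) closes the chain with the constant $2\sqrt{6}$, and $\|f\|_{L^4}^2=\|\omega\|_{L^2}$ is precisely what the Cauchy--Schwarz step $\int|W^+||\omega|\leq\mathcal{W}_+^{1/2}(\int|\omega|^2)^{1/2}$ requires. This exponent forces you to confront the zero locus of $\omega$, where $|\omega|^{1/2}$ fails to be $C^1$; the paper regularizes with $u_\epsilon=(|\omega|^2+\epsilon)^{1/4}$ and lets $\epsilon\to 0$. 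Your equality discussion then needs the corresponding repair: one must first show, via the convergence of the minimizing sequence $u_\epsilon$ to a Yamabe minimizer and the maximum principle for the Yamabe equation, that $|\omega|$ is nowhere vanishing before normalizing it to a constant and concluding $\nabla\omega\equiv 0$, i.e.\ that the metric is K\"ahler.
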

\begin{proof}
Let $\psi$ be a nonzero self-dual harmonic 2-form of $(M,g)$. By the Weitzenb\"ock formula \cite{bour} for a self-dual 2-form
$$0=(d+d^*)^2\psi=\nabla^*\nabla \psi -2W^+(\psi,\cdot)+\frac{s}{3}\psi$$
where and below $W^+$ and $s$ mean $W_g^+$ and $s_g$ respectively.
Taking an inner product with $\psi$ and using the identity
$|W^+||\psi|^2\geq \sqrt{\frac{3}{2}}W^+(\psi,\psi)$ gives
\begin{eqnarray*}
0&\geq& -\frac{1}{2}\Delta|\psi|^2+|\nabla\psi|^2-\frac{2}{3}\sqrt{6}|W^+||\psi|^2+\frac{s}{3}|\psi|^2.
\end{eqnarray*}

For any $\epsilon>0$, away from the zero locus of $\psi$
\begin{eqnarray*}
\frac{3|d|\psi|^2|^2}{8(|\psi|^2+\epsilon)}&\leq& \frac{3|d|\psi|^2|^2}{8|\psi|^2}\\ &=&\frac{3}{2}|d|\psi||^2\\ &\leq& |\nabla\psi|^2
\end{eqnarray*}
where the last line is a refined Kato inequality \cite{kato}.
At any point where $\psi=0$, a smooth function $|\psi|^2$ attains its minimum so that $d|\psi|^2=0$ there, implying that the above inequality
\begin{eqnarray}\label{cohort-2}
\frac{3|d|\psi|^2|^2}{8(|\psi|^2+\epsilon)}\leq |\nabla \psi|^2
\end{eqnarray}
holds true everywhere in $M$.

So we have that for any $\epsilon>0$
\begin{eqnarray*}
0&\geq& -\frac{3}{2}\Delta|\psi|^2+\frac{9|d|\psi|^2|^2}{8(|\psi|^2+\epsilon)}+(s-2\sqrt{6}|W^+|)|\psi|^2.
\end{eqnarray*}
at any point of $M$.

Multiply both sides by $(|\psi|^2+\epsilon)^{-\frac{1}{2}}$, and integrate by parts to get
$$0\geq \int_M(-\frac{3|d|\psi|^2|^2}{4(|\psi|^2+\epsilon)^{\frac{3}{2}}}+\frac{9|d|\psi|^2|^2}{8(|\psi|^2+\epsilon)^{\frac{3}{2}}}
+\frac{(s-2\sqrt{6}|W^+|)|\psi|^2}{(|\psi|^2+\epsilon)^{\frac{1}{2}}})\ d\mu_g$$
so that
\begin{eqnarray*}
0&\geq&\int_M(\frac{3|d|\psi|^2|^2}{8(|\psi|^2+\epsilon)^{\frac{3}{2}}}
+\frac{(s-2\sqrt{6}|W^+|)|\psi|^2}{(|\psi|^2+\epsilon)^{\frac{1}{2}}})\ d\mu_g\\
&=&   \int_M(6|d u_\epsilon|^2+(s-2\sqrt{6}|W^+|)(u_\epsilon^2-\epsilon u_\epsilon^{-2})\ d\mu_g\\
&\geq& \int_M(6|d u_\epsilon|^2+(s-2\sqrt{6}|W^+|)u_\epsilon^2-C_1\epsilon^{\frac{1}{2}})\ d\mu_g
\end{eqnarray*}
where we set $u_\epsilon:=(|\psi|^2+\epsilon)^{\frac{1}{4}}$, and $C_1>0$ is a constant.
Therefore
\begin{eqnarray*}
\int_M(6|d u_\epsilon|^2+su_\epsilon^2)\ d\mu_g &\leq& 2\sqrt{6}\int_M|W^+|u_\epsilon^2\ d\mu_g+ C_2\epsilon^{\frac{1}{2}}\\ &\leq&
2\sqrt{6}(\int_M|W^+|^2d\mu_g)^{\frac{1}{2}}(\int_Mu_\epsilon^4\ d\mu_g)^{\frac{1}{2}}+ C_2\epsilon^{\frac{1}{2}}
\end{eqnarray*}
for a constant $C_2>0$ by applying the H\"older inequality, and finally we have
\begin{eqnarray}\label{cohort-1}
Y_g(u_\epsilon)\leq 2\sqrt{6}(\int_M|W^+|^2d\mu_g)^{\frac{1}{2}} +\frac{C_2\epsilon^{\frac{1}{2}}}{||u_\epsilon||^2_{L^4}}.
\end{eqnarray}
Since $\epsilon>0$ is arbitrary and $$\lim_{\epsilon\rightarrow 0}||u_\epsilon||^2_{L^4}=(\int_M|\psi|^{2}d\mu_g)^{\frac{1}{2}}\ne 0$$ by Lebesgue's dominated convergence theorem, we can conclude that $$Y(M,[g])\leq \lim_{\epsilon\rightarrow 0}Y_g(u_\epsilon)\leq 2\sqrt{6}(\int_M|W^+|^2d\mu_g)^{\frac{1}{2}}.$$

To decide the equality case,  suppose $Y(M,[g])=2\sqrt{6}(\mathcal{W}_+(M,[g]))^{\frac{1}{2}}$.
Then as $\epsilon$ tends to 0, those $u_\epsilon$ for $0<\epsilon \ll 1$ give a minimizing sequence of $Y_g$ such that $\lim_{\epsilon\rightarrow 0}||u_\epsilon||_{L^4}$ is a nonzero constant and $|u_\epsilon|< (|\psi|^2+1)^{\frac{1}{4}}\in L^4(M)$.
By Theorem \ref{yam}, we can extract its subsequence converging to a Yamabe minimizer in $L_1^2$-norm. Thus the pointwise limit $\lim_{\epsilon\rightarrow 0}u_\epsilon=|\psi|^{\frac{1}{2}}$ is a minimizer of $Y_g$ and hence $|\psi|$ is nowhere vanishing by the well-known maximum principle of the Yamabe equation.

Now we choose a representative of $[g]$ such that $|\psi|$ is constant, and let's denote it still by $g$. Since $|\psi|$ is nowhere vanishing, one can proceed the above procedure with this $g$ and $\epsilon=0$, which leads to (\ref{cohort-1}) with $\epsilon=0$.
So the constant function $|\psi|^{\frac{1}{2}}=u_0$ is a minimizer of $Y_g$ implying that $g$ is a Yamabe metric, and the inequality (\ref{cohort-2}) with $\epsilon=0$ must be saturated. Therefore $\nabla \psi\equiv 0$ and the existence of a nontrivial parallel self-dual 2-form implies that $g$ is K\"ahler.

Conversely, suppose that $g$ is a K\"ahler Yamabe metric with nonnegative constant scalar curvature. Then taking $\psi$ to
be the K\"ahler form, by a well-known fact (\cite{Be}) that $$W^+(\psi)=\frac{s}{6}\psi,\ \ \  \textrm{and}\ \ \  W^+(\eta)=-\frac{s}{12}\eta$$ for
any self-dual 2-form $\eta$ pointwise-orthogonal to $\psi$, we have $$|W^+|=\frac{s}{2\sqrt{6}}$$
and
\begin{eqnarray*}
Y(M,[g])&=& \frac{\int_M s\ d\mu_g}{(\int_Md\mu_g)^{\frac{1}{2}}}\\ &=& (\int_Ms^2d\mu_g)^{\frac{1}{2}}\\ &=& (\int_M(2\sqrt{6}|W^+|)^2d\mu_g)^{\frac{1}{2}}.
\end{eqnarray*}
\end{proof}

The same inequality also holds when self-dual Weyl tensor is not identically zero and harmonic, i.e. $$\delta W^+=0.$$ It is known that $W^+$ is harmonic if Ricci tensor is parallel.
\begin{Theorem}\label{th5}
Let $(M,g)$ be a smooth closed oriented Riemannian 4-orbifold with nonzero harmonic self-dual Weyl tensor. Then $$Y(M,[g])\leq 2\sqrt{6}(\mathcal{W}_+(M,[g]))^{\frac{1}{2}}$$ where the equality holds iff $g$ is a Yamabe metric with positive constant scalar curvature and $W_g^+$ is parallel with exactly two eigenvalues at each point.
\end{Theorem}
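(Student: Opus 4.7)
The plan is to mimic the proof of Theorem \ref{th4} almost verbatim, replacing the harmonic self-dual 2-form $\psi$ by the harmonic Weyl curvature $W^+_g$ itself. The role of the Weitzenb\"ock identity for $\psi$ is taken over by the Bochner-type identity valid under the hypothesis $\delta_g W^+=0$; viewing $W^+$ as a trace-free self-adjoint endomorphism of the rank-$3$ bundle $\Lambda^2_+$, one obtains a pointwise identity of the form
$$-\tfrac{1}{2}\Delta|W^+|^2+|\nabla W^+|^2 = \alpha\, s\,|W^+|^2+\beta\,\det W^+$$
for specific universal constants $\alpha,\beta$ (with $\Delta=\operatorname{div}\operatorname{grad}$ as in Theorem \ref{th4}), where $\det W^+$ is the determinant of $W^+$ as an endomorphism of $\Lambda^2_+$.

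The role of the pointwise estimate $W^+(\psi,\psi)\leq \sqrt{2/3}\,|W^+||\psi|^2$ used in Theorem \ref{th4} is now played by the sharp algebraic bound
$$|\det W^+|\leq \frac{|W^+|^3}{3\sqrt{6}},$$
obtained by a Lagrange-multiplier argument on the three real eigenvalues of $W^+$, which sum to zero; equality holds if and only if two of the three eigenvalues coincide, i.e.\ $W^+$ has exactly two distinct eigenvalues at the point in question. The role of the refined Kato inequality for $\psi$ is taken over by the sharp refined Kato inequality for harmonic Weyl tensors, $|\nabla W^+|^2\geq k\,|d|W^+||^2$ away from the zero locus of $W^+$, regularised in the standard way near the zero locus exactly as in Theorem \ref{th4}.

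With these three ingredients in hand, one multiplies the resulting pointwise differential inequality by a suitable normalising constant and by $(|W^+|^2+\epsilon)^{-1/2}$, integrates over $M$, integrates by parts in the Laplacian term, and collects coefficients. The normalising constant and Kato constant are chosen so that the coefficient of $|du_\epsilon|^2$ equals $a_4=6$ (matching the Yamabe functional in dimension $4$) and the coefficient of $|W^+|$ equals exactly $2\sqrt{6}$, producing
$$\int_M\bigl(6|du_\epsilon|^2+(s-2\sqrt{6}|W^+|)(u_\epsilon^2-\epsilon u_\epsilon^{-2})\bigr)\, d\mu_g\leq C_0\,\epsilon^{1/2}$$
with $u_\epsilon:=(|W^+|^2+\epsilon)^{1/4}$. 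Applying the H\"older inequality to $\int|W^+|u_\epsilon^2$ and passing to $\epsilon\to 0$, where the hypothesis $W^+\not\equiv 0$ ensures that $\|u_\epsilon\|_{L^4}$ stays bounded away from zero, yields $Y(M,[g])\leq 2\sqrt{6}\,(\mathcal W_+(M,[g]))^{1/2}$.

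The equality case is handled exactly as in Theorem \ref{th4}: the dominating function $\tilde\varphi=(|W^+|^2+1)^{1/4}$ together with Theorem \ref{yam} forces $|W^+|^{1/2}$ to be a Yamabe minimiser, so that $|W^+|>0$ everywhere by the maximum principle; choosing a representative $g\in[g]$ for which $|W^+|$ is a positive constant and rerunning the argument with $\epsilon=0$ forces simultaneous pointwise equality in the refined Kato inequality and in the algebraic bound. Equality in Kato combined with constancy of $|W^+|$ gives $\nabla W^+\equiv 0$; equality in the algebraic bound gives that $W^+$ has exactly two distinct eigenvalues at every point; and the constant positive Yamabe minimiser yields a metric with positive constant scalar curvature. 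The converse direction is a direct computation. The main obstacle is the precise coefficient bookkeeping: one must identify the correct pointwise Bochner identity for $\delta_g W^+=0$ and the correct sharp refined Kato constant so that the three prefactors combine to give exactly $6$ in front of $|du_\epsilon|^2$ and $2\sqrt{6}$ in front of $|W^+|$; a secondary point is to verify that all manipulations, and the Yamabe-minimiser machinery of Theorem \ref{yam}, extend unchanged to the orbifold setting.
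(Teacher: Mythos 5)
Your outline is the paper's strategy (Weitzenb\"ock identity for harmonic $W^+$, the determinant bound $\det W^+\leq \frac{\sqrt{6}}{18}|W^+|^3$ with equality iff two eigenvalues coincide, the refined Kato inequality, regularisation by $\epsilon$, and Theorem \ref{yam} for the equality case). But the one step you defer as ``coefficient bookkeeping'' is precisely where your proposal breaks, because you have copied the exponents from Theorem \ref{th4} instead of recomputing them. With your choice $u_\epsilon=(|W^+|^2+\epsilon)^{1/4}$ and weight $(|W^+|^2+\epsilon)^{-1/2}$, set $v=|W^+|^2+\epsilon$, so $|dv|^2/v^{3/2}=16|du_\epsilon|^2$. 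Integrating the Bochner inequality against $v^{-1/2}$ contributes $\frac12\int |dv|^2 v^{-3/2}=8\int|du_\epsilon|^2$ from the Laplacian term, while the sharp Kato constant $5/3$ for harmonic $W^+$ (which is \emph{not} a free parameter) turns $2|\nabla W^+|^2v^{-1/2}$ into at least $\frac{5}{6}|dv|^2v^{-3/2}=\frac{40}{3}|du_\epsilon|^2$. The net gradient coefficient is therefore $\frac{40}{3}-8=\frac{16}{3}<6$, and no overall normalising constant can change this ratio since it multiplies the zeroth-order terms as well. An upper bound on $\int(\frac{16}{3}|du_\epsilon|^2+s\,u_\epsilon^2)$ is strictly weaker than one on $\int(6|du_\epsilon|^2+s\,u_\epsilon^2)$, so you cannot bound $Y_g(u_\epsilon)$ and the argument does not close.

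The correct choice, which the paper makes, is $u_\epsilon=(|W^+|^2+\epsilon)^{1/6}$ with weight $(|W^+|^2+\epsilon)^{-2/3}$: then $|dv|^2/v^{5/3}=36|du_\epsilon|^2$, the Laplacian term contributes $\frac23\cdot 36=24$, the Kato term $\frac56\cdot 36=30$, and the difference is exactly $6$. With this exponent $u_\epsilon^2\to|W^+|^{2/3}$, so the H\"older step $\int|W^+|^{5/3}\leq(\int|W^+|^2)^{1/2}(\int|W^+|^{4/3})^{1/2}$ is genuinely needed (whereas with your $1/4$ power it would be vacuous), the nonvanishing denominator is $\lim\|u_\epsilon\|_{L^4}^2=(\int|W^+|^{4/3})^{1/2}$, and in the equality case the Yamabe minimiser is $|W^+|^{1/3}$, not $|W^+|^{1/2}$, dominated by $(|W^+|^2+1)^{1/6}$. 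Once these exponents are corrected, the rest of your argument (including the orbifold-compatibility remarks and the treatment of the equality case) does follow the paper's proof.
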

\begin{proof}
We use the idea of Proposition 3.4 of \cite{gur2}, but we reduce the proof of \cite{gur2} much without going through the Yamabe problem for the so-called modified scalar curvature, and our proof is overall similar to the previous theorem. We abbreviate $W_g^+$ and $s_g$  by $W^+$ and $s$ respectively.

By the Weitzenb\"ock formula in \cite{Be, derdzinski} and using the identity
\begin{eqnarray}\label{Wuhanvirus}
\det W^+\leq \frac{\sqrt{6}}{18}|W^+|^3
\end{eqnarray}
whose equality is attained at a point where $W^+\ne 0$ iff $W^+$ at that point has precisely two eigenvalues,
\begin{eqnarray}\label{Wuhanvirus-2}
\Delta|W^+|^2&=&2|\nabla W^+|^2-36\det W^++s|W^+|^2\nonumber\\ &\geq& 2|\nabla W^+|^2-2\sqrt{6}|W^+|^3+s|W^+|^2.
\end{eqnarray}

For any $\epsilon>0$, away from the zero locus of $W^+$.
\begin{eqnarray*}
\frac{5|d|W^+|^2|^2}{12(|W^+|^2+\epsilon)}&\leq& \frac{5|d|W^+|^2|^2}{12|W^+|^2}\\ &=&\frac{5}{3}|d|W^+||^2\\ &\leq& |\nabla W^+|^2
\end{eqnarray*}
where the last line is a refined Kato inequality (see Lemma 2.1 in \cite{gur2} or Lemma 4 of \cite{GL}).
At any point where $W^+=0$, a smooth function $|W^+|^2$ attains its minimum so that $d|W^+|^2=0$ there, implying that the above inequality
\begin{eqnarray}\label{cohort-4}
\frac{5|d|W^+|^2|^2}{12(|W^+|^2+\epsilon)}\leq |\nabla W^+|^2
\end{eqnarray}
holds true everywhere in $M$.
So we have that
\begin{eqnarray*}
\Delta|W^+|^2&\geq& \frac{5|d|W^+|^2|^2}{6(|W^+|^2+\epsilon)}+(s-2\sqrt{6}|W^+|)|W^+|^2
\end{eqnarray*}
at any point of $M$.

Multiply both sides by $(|W^+|^2+\epsilon)^{-\frac{2}{3}}$, and integrate by parts to get
$$\int_M\frac{2|d|W^+|^2|^2}{3(|W^+|^2+\epsilon)^{\frac{5}{3}}}d\mu_g\geq\int_M(\frac{5|d|W^+|^2|^2}{6(|W^+|^2+\epsilon)^{\frac{5}{3}}}
+\frac{(s-2\sqrt{6}|W^+|)|W^+|^2}{(|W^+|^2+\epsilon)^{\frac{2}{3}}})\ d\mu_g$$
so that
\begin{eqnarray*}
0&\geq&\int_M(\frac{|d|W^+|^2|^2}{6(|W^+|^2+\epsilon)^{\frac{5}{3}}}
+\frac{(s-2\sqrt{6}|W^+|)|W^+|^2}{(|W^+|^2+\epsilon)^{\frac{2}{3}}})\ d\mu_g\\
&\geq&   \int_M(6|d u_\epsilon|^2+(s-2\sqrt{6}|W^+|)(u_\epsilon^2-\epsilon u_\epsilon^{-4})\ d\mu_g\\
&\geq& \int_M(6|d u_\epsilon|^2+(s-2\sqrt{6}|W^+|)u_\epsilon^2-C_1\epsilon^{\frac{1}{3}})\ d\mu_g
\end{eqnarray*}
where we set $u_\epsilon:=(|W^+|^2+\epsilon)^{\frac{1}{6}}$, and $C_1>0$ is a constant.
Therefore
\begin{eqnarray*}
\int_M(6|d u_\epsilon|^2+su_\epsilon^2)\ d\mu_g &\leq& 2\sqrt{6}\int_M|W^+|u_\epsilon^2\ d\mu_g+ C_2\epsilon^{\frac{1}{3}}\\ &\leq&
2\sqrt{6}(\int_M|W^+|^2d\mu_g)^{\frac{1}{2}}(\int_Mu_\epsilon^4\ d\mu_g)^{\frac{1}{2}}+ C_2\epsilon^{\frac{1}{3}}
\end{eqnarray*}
for a constant $C_2>0$ by applying the H\"older inequality, and finally we have
\begin{eqnarray}\label{cohort-3}
Y_g(u_\epsilon)\leq 2\sqrt{6}(\int_M|W^+|^2d\mu_g)^{\frac{1}{2}} +\frac{C_2\epsilon^{\frac{1}{3}}}{||u_\epsilon||^2_{L^4}}.
\end{eqnarray}
Since $\epsilon>0$ is arbitrary and $$\lim_{\epsilon\rightarrow 0}||u_\epsilon||^2_{L^4}=(\int_M|W^+|^{\frac{4}{3}}d\mu_g)^{\frac{1}{2}}\ne 0$$ by Lebesgue's dominated convergence theorem,  we can conclude that $$Y(M,[g])\leq \lim_{\epsilon\rightarrow 0}Y_g(u_\epsilon)\leq 2\sqrt{6}(\int_M|W^+|^2d\mu_g)^{\frac{1}{2}}.$$

To decide the equality case,  suppose $Y(M,[g])=2\sqrt{6}(\mathcal{W}_+(M,[g]))^{\frac{1}{2}}>0$.
Then in the same way as the previous theorem $|W^+|^{\frac{1}{3}}=\lim_{\epsilon\rightarrow 0}u_\epsilon$ must be a minimizer of $Y_g$ and hence $|W^+|$ is nowhere vanishing by the maximum principle. We choose a representative $g$ of $[g]$ such that $|W^+|$ is constant.
Since $|W^+|$ is nowhere vanishing, one can proceed the above procedure with this $g$ and $\epsilon=0$, which leads to (\ref{cohort-3}) with $\epsilon=0$.
So the constant function $|W^+|^{\frac{1}{3}}=u_0$ is a minimizer of $Y_g$ implying that $g$ is a Yamabe metric, and the inequality (\ref{cohort-4}) with $\epsilon=0$ must be saturated. Therefore $\nabla W^+\equiv 0$. Since the equality in (\ref{Wuhanvirus}) must be attained too,  $W^+$ has two eigenvalues at each point.


Conversely, suppose that $g$ is a Yamabe metric with positive constant scalar curvature and $W^+$ is parallel with exactly two eigenvalues at each point. Then equalities hold in (\ref{Wuhanvirus}) and (\ref{Wuhanvirus-2}), so $s\equiv 2\sqrt{6}|W^+|$.  Thus
\begin{eqnarray*}
Y(M,[g])&=& \frac{\int_M s\ d\mu_g}{(\int_Md\mu_g)^{\frac{1}{2}}}\\ &=& (\int_Ms^2d\mu_g)^{\frac{1}{2}}\\ &=& (\int_M(2\sqrt{6}|W^+|)^2d\mu_g)^{\frac{1}{2}}.
\end{eqnarray*}

\end{proof}

\subsection{Approximation lemma}

In this subsection, $B(r)\subset \Bbb R^4$ for any $r>0$ denotes the open ball of radius $r$ with center at the origin, and  following \cite{L&P} we write
$f=O''(|x|^k)$ for a smooth function $f:B(r)\rightarrow \Bbb R$ to mean $$f=O(|x|^k),\ \ \  \nabla f=O(|x|^{k-1}), \ \ \   \nabla^2 f=O(|x|^{k-2}),$$
i.e. there exists a constant $C>0$ such that $$|f(x)|\leq C|x|^k,\ \ \  |\nabla f(x)|\leq C|x|^{k-1}, \ \ \  |\nabla^2 f(x)|\leq C|x|^{k-2}$$ for all $x\in B(r)$.

A similar version of the following lemma is also proved in \cite{ABKS} by using a different method.
\begin{Lemma}\label{IKim}
Let $(M,g)$ be a smooth Riemannian 4-orbifold and $p\in M$ be any point. Then for any $\epsilon>0$ and neighborhood $V$ of $p$ there exists a smooth Riemannian orbifold metric $\bar{g}$ on $M$ such that $\bar{g}$ is flat in a neighborhood of $p$, it is equal to $g$ on $V^c$, and
\begin{eqnarray}\label{covid19}
|s_{\bar{g}}|<C,\  \ \ \ \ \ \ \ \ \ \ |\textrm{Vol}(\bar{g})-\textrm{Vol}(g)|<\epsilon
\end{eqnarray}
$$|Y(M,[g])-Y(M,[\bar{g}])|<\frac{\epsilon}{2}\ \ \ \ \textrm{and}\ \ \ \ |\mathcal{W}_+(M,[g])-\mathcal{W}_+(M,[\bar{g}])|<\frac{\epsilon}{2}$$ where $C>0$ is a constant independent of $\epsilon>0$ and $V$.
\end{Lemma}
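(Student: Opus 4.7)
The plan is to replace $g$ by a metric that is exactly Euclidean on a small ball around $p$ while perturbing $g$ only on a set contained in $V$, exploiting the fact that normal coordinates make the deviation from the flat metric quadratic in $|x|$. First I choose an orbifold-uniformizing chart at $p$ given by a $\Gamma$-invariant neighborhood $U\subset\Bbb R^4$ on which $g$ lifts to a $\Gamma$-invariant smooth metric $\tilde g$. Taking $\tilde g$-normal coordinates at the origin — which are $\Gamma$-equivariant because $\Gamma<SO(4)$ fixes $0$ and commutes with $d\exp_0=\textrm{id}$ — yields $\tilde g_{ij}(x)=\delta_{ij}+O''(|x|^2)$ on some ball $B(R)\subset U$, with $R$ shrunk so that $B(R)/\Gamma\subset V$.

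Next, for $\delta\in(0,R/2)$ I pick a radial cutoff $\chi_\delta:[0,\infty)\to[0,1]$ satisfying $\chi_\delta\equiv 0$ on $[0,\delta/2]$, $\chi_\delta\equiv 1$ on $[\delta,\infty)$, with $|\chi_\delta'|\leq C/\delta$ and $|\chi_\delta''|\leq C/\delta^2$, and define
$$\bar g:=\chi_\delta(|x|)\,\tilde g+(1-\chi_\delta(|x|))\,g_{_{\Bbb E^4}}$$
on $B(R)$, extending by $g$ outside. Since $g_{_{\Bbb E^4}}$ and the radial factor $\chi_\delta(|x|)$ are $SO(4)$-invariant and $\tilde g$ is $\Gamma$-invariant, $\bar g$ descends to a smooth orbifold metric on $M$ which agrees with $g$ off $B(R)/\Gamma$ and is flat on $B(\delta/2)/\Gamma$.

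The delicate step is the uniform scalar curvature bound. On the transition annulus $\{\delta/2\leq|x|\leq\delta\}$ one has $\bar g-g_{_{\Bbb E^4}}=\chi_\delta(\tilde g-g_{_{\Bbb E^4}})$, so Leibniz yields three kinds of terms in $\nabla^2\bar g$: a piece $\nabla^2\chi_\delta\cdot(\tilde g-g_{_{\Bbb E^4}})$ of size $O(\delta^{-2})\cdot O(\delta^2)=O(1)$, a piece $\nabla\chi_\delta\cdot\nabla(\tilde g-g_{_{\Bbb E^4}})$ of size $O(\delta^{-1})\cdot O(\delta)=O(1)$, and a piece $\chi_\delta\cdot\nabla^2(\tilde g-g_{_{\Bbb E^4}})=O(1)$. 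This is precisely where the $O''(|x|^2)$ behavior of normal coordinates compensates the blowing-up derivatives of $\chi_\delta$. Hence the full Riemann tensor of $\bar g$ — and in particular $s_{\bar g}$ and $W^+_{\bar g}$ — is bounded by a constant depending only on $(M,g)$, giving the required $\delta$-independent bound.

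Granted this uniform bound, the volume and Weyl estimates are immediate, since both integrands are uniformly bounded on $B(\delta)/\Gamma$ and the region of disagreement has volume $O(\delta^4)$. For the Yamabe closeness I plan to feed a near-optimal test function $f$ for $Y_g$ into $Y_{\bar g}$, and symmetrically. As tensors, $g$ and $\bar g$ are uniformly comparable with $\bar g-g=O(\delta^2)$ on $B(\delta)/\Gamma$, so the gradient and measure errors contribute $O(\delta^2)\int_M|df|_g^2\,d\mu_g$, controlled by the near-optimality of $f$, while the scalar-curvature error is dominated by $C\Vol(B(\delta))^{1/2}\|f\|^2_{L^4}=O(\delta^2)$ via H\"older and $p_4=4$. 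The main obstacle I foresee is making these test-function comparisons robust at the orbifold singularity, where one must lift to the uniformizing chart and invoke a $\Gamma$-invariant Sobolev estimate of the sort alluded to in the Rellich--Kondrakov footnote to Theorem \ref{yam}; once that is in hand, choosing $\delta$ sufficiently small delivers the $\epsilon/2$ closeness of the Yamabe constants and completes the proof.
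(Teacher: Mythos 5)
Your proposal is correct and follows essentially the same route as the paper: interpolate between $g$ and the Euclidean metric in $\Gamma$-equivariant normal coordinates, using the $O''(|x|^2)$ decay to cancel the blow-up of the cutoff derivatives and obtain a curvature bound independent of the gluing scale, then compare Yamabe constants via near-optimal test functions in both directions. The only cosmetic difference is that the paper performs the cutoff at a fixed radius after rescaling by $f_R^*(R^2g)$ and then scales back, whereas you shrink the cutoff directly and track powers of $\delta$ — these are the same computation in different coordinates.
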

\begin{proof}
Take a local uniformizing chart $U/\Gamma\subset M$ around $p$.
(If $p$ is not an orbifold point, then $\Gamma$ is the trivial group.)
By pull-back, $g$ gives a $\Gamma$-invariant smooth metric on $U$, which we still denote by $g$ by abuse of notation. Let's say that $B(d)$ for $d>0$ gives a normal coordinate of $U$ via the exponential map $\exp_g$ of $g$ at $p$.

We claim that $$g(x)=\sum_{i,j}(\delta_{ij}+O''(|x|^2))dx_idx_j$$ for $x\in B(d)$ by taking $d$ smaller if necessary.
Since all the 1st order derivatives of smooth functions $g_{ij}$ vanish at $0$, from their Taylor series expansions
\begin{eqnarray}\label{fareast-1}
g_{ij}(x)=\delta_{ij}+O(|x|^2)
\end{eqnarray}
\begin{eqnarray}\label{fareast-2}
\frac{\partial g_{ij}(x)}{\partial x_k}=O(|x|)
\end{eqnarray}
for all $i,j,k$ and $x\in B(d)$ by taking $d$ smaller if necessary.
Since $g_{ij}$ is smooth, it is obvious that
\begin{eqnarray}\label{fareast-3}
\frac{\partial^2 g_{ij}(x)}{\partial x_k^2}=O(1),
\end{eqnarray}
thereby justifying the claim.

From the above estimate we deduce that $$f_R^*(R^2g)=\sum_{i,j}(\delta_{ij}+\frac{1}{R^2}O''(|x|^2))dx_idx_j$$ for $x\in B(d)$
where  $$f_R:B(d)\rightarrow B(\frac{d}{R})$$ for $R\gg 1$ is the contraction map given by $x\mapsto \frac{x}{R}$.
To show it, let's denote $f_R^*(R^2g)$ by $\mathfrak{g}$ just for notational convenience.
From (\ref{fareast-1}), it readily follows that
\begin{eqnarray*}
\mathfrak{g}_{ij}(x)&=& R^2g(\frac{x}{R})(\frac{1}{R}\frac{\partial}{\partial x_i},\frac{1}{R}\frac{\partial}{\partial x_j})\\ &=& g_{ij}(\frac{x}{R})\\ &=& \delta_{ij}+O(|\frac{x}{R}|^2)\\ &=&\delta_{ij}+\frac{1}{R^2}O(|x|^2).
\end{eqnarray*}
By applying (\ref{fareast-2}) and the chain rule,
\begin{eqnarray*}
\frac{\partial \mathfrak{g}_{ij}(x)}{\partial x_k}&=&
\frac{\partial}{\partial x_k}g_{ij}(\frac{x}{R})\\ &=& \sum_m
\frac{\partial}{\partial (\frac{x_m}{R})}(g_{ij}(\frac{x}{R}))\cdot \frac{\partial}{\partial x_k}(\frac{x_m}{R})\\ &=&
\frac{\partial}{\partial (\frac{x_k}{R})}(g_{ij}(\frac{x}{R}))\cdot \frac{\partial}{\partial x_k}(\frac{x_k}{R})\\
&=& O(|\frac{x}{R}|)\frac{1}{R}\\ &=& \frac{1}{R^2}O(|x|)
\end{eqnarray*}
for all $i,j,k$.
Similarly applying (\ref{fareast-3}) and the chain rule twice,
\begin{eqnarray*}
\frac{\partial^2 \mathfrak{g}_{ij}(x)}{\partial x_k^2}&=& \frac{\partial^2}{\partial x_k^2}g_{ij}(\frac{x}{R})\\&=& \left(\frac{\partial}{\partial (\frac{x_k}{R})}\right)^2(g_{ij}(\frac{x}{R}))\cdot (\frac{\partial}{\partial x_k}(\frac{x_k}{R}))^2\\ &=& O(1)\frac{1}{R^2}
\end{eqnarray*}
for all $i,j,k$.

Let $\lambda : \Bbb R\rightarrow [0,1]$ be a fixed smooth decreasing function which is equal to 1 on $(-\infty,\frac{d}{3}]$ and 0 on $[\frac{2d}{3},\infty)$.
Define a $\Gamma$-invariant metric $$g_{_R}(x):=\lambda(|x|)g_{_{\Bbb E^4}}+(1-\lambda(|x|))f_R^*(R^2g)$$ on $B(d)$.
Since $g_{_R}$ at $\partial B(d)$ coincides with $f_R^*(R^2g)$, it extends to a metric on $M$, still denoted by $g_{_R}$  coinciding with $R^2g$ on the complement of $M_R:=\exp_g(B(\frac{d}{R}))/\Gamma$. For any sufficiently large $R$, $M_R$ is contained in $V$.

We claim that $\frac{1}{R^{2}}g_{_R}$ for any sufficiently large $R$ is the desired metric $\bar{g}$.
Since $f_R^*(R^2g)$ and hence $g_{_R}$ differ from $g_{_{\Bbb E^4}}$ on $B(d)$ by $\frac{1}{R^2}O''(|x|^2)$,
we may let
$$|R^2g-g_{_R}|_{R^2g}\leq \frac{C_3}{R^2},\ \ \ \ \
|s_{g_{_R}}-s_{_{R^2g}}|<\frac{C_3}{R^2},\ \ \ \ \ ||W^+_{g_{_R}}|-|W^+_{{R^2g}}||<\frac{C_3}{R^2}$$ on $M$ for a constant $C_3>0$ independent of $R>1$, where the pointwise norm of a $(0,2)$-tensor is measured w.r.t $R^2g$. So for $\bar{g}=\frac{1}{R^{2}}g_{_R}$
$$|g-\bar{g}|_g\leq \frac{C_3}{R^2},\ \ \ \ \
|s_{\bar{g}}-s_{g}|<C_3,\ \ \ \ \ ||W^+_{\bar{g}}|-|W^+_{g}||<C_3.$$ Since $\bar{g}=g$ outside of $M_R$,
the conditions of (\ref{covid19}) except the closeness of the Yamabe constants follow for any sufficiently large $R$.

To prove $$Y(M,[g_{_R}])< Y(M,[g])+\frac{\epsilon}{2}$$ for any sufficiently large $R$, we may write at each $x\in M$  $$d\mu_{g_{_R}}(x)=(1+\epsilon_1(x))d\mu_{_{R^2g}}(x),\ \ \ \ \ s_{g_{_R}}(x)=s_{_{R^2g}}(x)+\epsilon_2(x)$$
$$|\alpha|^2_{g_{_R}}\leq (1+\epsilon_3(x))|\alpha|^2_{_{R^2g}}$$ for any $\alpha\in T_x^*M$ such that all $\epsilon_i(x)$ are non-vanishing only for $x\in M_R$ and  $$\epsilon':=\max_{x\in M} (|\epsilon_1+(1+\epsilon_1)\epsilon_3|+|s_{_{R^2g}}\epsilon_1+(1+\epsilon_1)\epsilon_2|+|\epsilon_1|)$$ satisfies that $$\epsilon'< \frac{C_4}{R^2}$$ for a constant $C_4>0$ independent of $R\gg 1$.

First, let's consider the case when $Y(M,[g])> 0$. For any smooth $u:M\rightarrow \Bbb R$ such that $$Y(M,[g])\leq Y_{R^2g}(u)< Y(M,[g])+\frac{\epsilon}{4},$$
\begin{eqnarray*}
Y(M,[g_{_R}])&\leq& Y_{g_{_R}}(u)\\ &\leq& \frac{\int_M (6(1+\epsilon_3)|du|^2_{_{R^2g}}+(s_{_{R^2g}}+\epsilon_2)u^2)(1+\epsilon_1)\ d\mu_{_{R^2g}}}{(\int_M u^4(1+\epsilon_1)\ d\mu_{_{R^2g}})^{\frac{1}{2}}}\\ &=& \frac{\int_M (6|du|^2_{_{R^2g}}+s_{_{R^2g}}u^2)\ d\mu_{_{R^2g}}}{(\int_M u^4(1+\epsilon_1)\ d\mu_{_{R^2g}})^{\frac{1}{2}}}\\ & & +\frac{\int_{M} (6(\epsilon_1+\epsilon_3(1+\epsilon_1))|du|^2_{_{R^2g}}+(s_{_{R^2g}}\epsilon_1+\epsilon_2(1+\epsilon_1))u^2)\ d\mu_{_{R^2g}}}{(\int_M u^4(1+\epsilon_1)\ d\mu_{_{R^2g}})^{\frac{1}{2}}}\\ &\leq& \frac{Y_{R^2g}(u)}{(1-\epsilon')^{\frac{1}{2}}} +\frac{\epsilon'}{(1-\epsilon')^{\frac{1}{2}}}\frac{\int_{M_R} (6|du|^2_{_{R^2g}}+u^2)\ d\mu_{_{R^2g}}}{(\int_{M} u^4\ d\mu_{_{R^2g}})^{\frac{1}{2}}}\\ &\leq&  \frac{Y_{R^2g}(u)}{(1-\epsilon')^{\frac{1}{2}}}+\frac{\epsilon'}{(1-\epsilon')^{\frac{1}{2}}}\frac{\int_{M} (6|du|^2_{_{R^2g}}+s_{_{R^2g}}u^2+|s_{_{R^2g}}|u^2)\ d\mu_{_{R^2g}}+\int_{M_R}u^2d\mu_{_{R^2g}}}{(\int_{M} u^4\ d\mu_{_{R^2g}})^{\frac{1}{2}}}\\ &\leq&
\frac{Y_{R^2g}(u)}{(1-\epsilon')^{\frac{1}{2}}}+\frac{\epsilon'}{(1-\epsilon')^{\frac{1}{2}}}
\left(Y_{R^2g}(u)+(\int_{M}|s_{_{R^2g}}|^2d\mu_{_{R^2g}})^{\frac{1}{2}}+(\int_{M_R}d\mu_{_{R^2g}})^{\frac{1}{2}}\right)\\ &\leq&
\frac{Y(M,[g])+\frac{\epsilon}{4}}{(1-\epsilon')^{\frac{1}{2}}}+\frac{\epsilon'}{(1-\epsilon')^{\frac{1}{2}}}
\left(Y(M,[g])+\frac{\epsilon}{4}+C_5\right)\\
&<& Y(M,[g])+\frac{\epsilon}{2}
\end{eqnarray*}
for any sufficiently large $R$, where $C_5>0$ is a constant independent of $R>1$.

Secondly let's consider the case of $Y(M,[g])\leq 0$. For a Yamabe minimizer $u:M\rightarrow \Bbb R$ of $Y_g$ the same method as above gives
\begin{eqnarray*}
Y(M,[g_{_R}])&\leq& Y_{g_{_R}}(u)\\ &\leq&
\frac{Y_{R^2g}(u)}{(1+\epsilon')^{\frac{1}{2}}}+\frac{\epsilon'}{(1-\epsilon')^{\frac{1}{2}}}
\left(Y_{R^2g}(u)+(\int_{M}|s_{_{R^2g}}|^2d\mu_{_{R^2g}})^{\frac{1}{2}}+(\int_{M_R}d\mu_{_{R^2g}})^{\frac{1}{2}}\right)\\ &\leq& \frac{Y(M,[g])}{(1+\epsilon')^{\frac{1}{2}}}+\frac{\epsilon'}{(1-\epsilon')^{\frac{1}{2}}}
\left(Y(M,[g])+C_5\right)\\ &<& Y(M,[g])+\frac{\epsilon}{2}.
\end{eqnarray*}
for any sufficiently large $R$, where $Y_{R^2g}(u)$ is divided by $(1+\epsilon')^{\frac{1}{2}}$ rather than $(1-\epsilon')^{\frac{1}{2}}$, because $Y_{R^2g}(u)\leq 0$ now.

The proof of $$ Y(M,[g])<Y(M,[g_{_R}])+\frac{\epsilon}{2}$$ for any sufficiently large $R$ can be done in the same way as above just by changing the role of $R^2g$ and $g_{_R}$. We can write at each $x\in M$ $$d\mu_{_{R^2g}}(x)=(1+\varepsilon_1(x))d\mu_{g_{_R}}(x),\ \ \ \ \ s_{_{R^2g}}(x)=s_{g_{_R}}(x)+\varepsilon_2(x)$$
$$|\alpha|^2_{_{R^2g}}\leq (1+\varepsilon_3(x))|\alpha|^2_{g_{_R}}$$ for any $\alpha\in T_x^*M$ such that all $\varepsilon_i(x)$ are non-vanishing only for $x\in M_R$ and  $$\varepsilon':=\max_{x\in M} (|\varepsilon_1+(1+\varepsilon_1)\varepsilon_3|+|s_{g_{_R}}\varepsilon_1+(1+\varepsilon_1)\varepsilon_2|+|\varepsilon_1|)$$ satisfies that $$\varepsilon'< \frac{C_6}{R^2}$$ for a constant $C_6>0$ independent of $R\gg 1$.

Dealing with the positive and nonpositive case of $Y(M,[g_{_R}])$ together,
for any smooth $u_{_R}:M\rightarrow \Bbb R$ satisfying $$Y(M,[g_{_R}])\leq Y_{g_{_R}}(u_{_R})< Y(M,[g_{_R}])+\frac{\epsilon}{4}$$ we get
\begin{eqnarray*}
Y(M,[g])&\leq& Y_{R^2g}(u_{_R})\\ &\leq& \frac{\int_M (6(1+\varepsilon_3)|du_{_R}|^2_{g_{_R}}+(s_{g_{_R}}+\varepsilon_2)u_{_R}^2)(1+\varepsilon_1)\ d\mu_{g_{_R}}}{(\int_M u_{_R}^4(1+\varepsilon_1)\ d\mu_{g_{_R}})^{\frac{1}{2}}}\\ &=& \frac{\int_M (6|du_{_R}|^2_{g_{_R}}+s_{g_{_R}}u_{_R}^2)\ d\mu_{g_{_R}}}{(\int_M u_{_R}^4(1+\varepsilon_1)\ d\mu_{g_{_R}})^{\frac{1}{2}}}\\ & & +\frac{\int_{M} (6(\varepsilon_1+\varepsilon_3(1+\varepsilon_1))|du_{_R}|^2_{g_{_R}}+(s_{g_{_R}}\varepsilon_1+\varepsilon_2(1+\varepsilon_1))u_{_R}^2)\ d\mu_{g_{_R}}}{(\int_M u_{_R}^4(1+\varepsilon_1)\ d\mu_{g_{_R}})^{\frac{1}{2}}}\\ &\leq& \max(\frac{Y_{g_{_R}}(u_{_R})}{(1-\varepsilon')^{\frac{1}{2}}}, \frac{Y_{g_{_R}}(u_{_R})}{(1+\varepsilon')^{\frac{1}{2}}}) +\frac{\varepsilon'\int_{M_R} (6|du_{_R}|^2_{g_{_R}}+u_{_R}^2)\ d\mu_{g_{_R}}}{(1-\varepsilon')^{\frac{1}{2}}(\int_{M} u_{_R}^4\ d\mu_{g_{_R}})^{\frac{1}{2}}}\\ &\leq&  \max( \frac{Y_{g_{_R}}(u_{_R})}{(1-\varepsilon')^{\frac{1}{2}}}, \frac{Y_{g_{_R}}(u_{_R})}{(1+\varepsilon')^{\frac{1}{2}}}) \\ & & +  \frac{\varepsilon'}{(1-\varepsilon')^{\frac{1}{2}}}\frac{\int_{M} (6|du_{_R}|^2_{g_{_R}}+s_{g_{_R}}u_{_R}^2+|s_{g_{_R}}|u_{_R}^2)\ d\mu_{g_{_R}}+\int_{M_R}u_{_R}^2d\mu_{g_{_R}}}{(\int_{M} u_{_R}^4\ d\mu_{g_{_R}})^{\frac{1}{2}}}\\ &\leq&  \max( \frac{Y_{g_{_R}}(u_{_R})}{(1-\varepsilon')^{\frac{1}{2}}}, \frac{Y_{g_{_R}}(u_{_R})}{(1+\varepsilon')^{\frac{1}{2}}})\\ & & +\frac{\varepsilon'}{(1-\varepsilon')^{\frac{1}{2}}}\left(Y_{g_{_R}}(u_{_R})+(\int_{M}|s_{g_{_R}}|^2d\mu_{g_{_R}})^{\frac{1}{2}}      +(\int_{M_R}d\mu_{g_{_R}})^{\frac{1}{2}}\right)\\ &\leq&  \max( \frac{Y(M,[g_{_R}])+\frac{\epsilon}{4}}{(1-\varepsilon')^{\frac{1}{2}}}, \frac{Y(M,[g_{_R}])+\frac{\epsilon}{4}}{(1+\varepsilon')^{\frac{1}{2}}})+\frac{\varepsilon'(Y(M,[g_{_R}])+\frac{\varepsilon}{4}+C_7)}{(1-\varepsilon')^{\frac{1}{2}}}
\end{eqnarray*}
by using
\begin{eqnarray*}
\int_{M}|s_{g_{_R}}|^2d\mu_{g_{_R}}&=& \int_{M-M_R}|s_{_{R^2g}}|^2d\mu_{_{R^2g}}+ \int_{M_R}|s_{g_{_R}}|^2 d\mu_{g_{_R}}\\ &\leq& \int_{M-M_R}|s_g|^2d\mu_g+ \int_{M_R}C_8\ d\mu_{g_{_R}}\\  &\leq& C_9
\end{eqnarray*}
where $C_7, C_8, C_9$ are positive constants independent of $R>1$.

So we have that $Y(M,[g_{_R}])$ is bounded below by a (negative) constant independent of $R>1$.
Applying this fact back to the above last line of computing an upper bound of $Y(M,[g])$, it finally becomes $$<Y(M,[g_{_R}])+\frac{\epsilon}{2}$$ for any sufficiently large $R$.
This completes the proof.
\end{proof}

\subsection{Main proof}


Let $p_1,\cdots,p_m$ be all the orbifold points of $M$ with corresponding orbifold groups $\Gamma_1,\cdots,\Gamma_m$ respectively.
Observe that each $\Gamma_j$ has not only the induced isometric action on $(\Bbb R^4,g_{_{\Bbb E^4}})$ fixing the origin, but also it can act on $(S^4,g_{_{\Bbb S^4}})$ isometrically fixing only the south pole and the north pole $q_0$.
Let $\psi: \Bbb R^4\rightarrow \Bbb R$ be the conformal factor given by $\psi(x):=\frac{2}{1+|x|^2}$ so that $\psi^2g_{_{\Bbb E^4}}$ is equal to $g_{_{\Bbb S^4}}$ on $S^4-\{q_0\}$.

Let $\epsilon\in (0,Y(M,[g]))$. For each $j=1,\cdots,m$, take a smooth $\Gamma_j$-invariant metric $h_j$ on $\Bbb R^4$ such that $h_j$ coincides with $g_{_{\Bbb E^4}}$ outside of $B(1)$,
\begin{eqnarray}\label{Swiss}
||g_{_{\Bbb E^4}}-h_j||_{C^2}< \epsilon,\  \ \ \ \ \ \ \ s_{\psi^2h_j}>0,
\end{eqnarray}
and
$$0<\int_{\Bbb R^4} |W^+_{h_j}|^2d\mu_{h_j}<\frac{\epsilon}{2m}$$ where the $C^2$-norm is computed w.r.t. the Euclidean metric. (For example, $h_j$ can be obtained by taking any local small perturbation of $g_{_{\Bbb E^4}}$ along a free orbit such that $W^+_{h_j}$ is not identically zero.) We regard $\psi^2h_j$ as a smooth metric on $S^4$, which is $\Gamma_j$-invariant.
Since  $\psi^2{h_j}$ is not conformally flat and hence not conformal to $g_{_{\Bbb S^4}}$,
$$Y(S^4,[\psi^2h_j]_{\Gamma_j})< Y(S^4)$$ by the resolution of the Hevey-Vaugon conjecture in dimension 4.

For the orbifold metric $\hat{h}_j$ on $\Bbb R^4/\Gamma_j$ induced by $h_j$ and $\hat{\psi}: \Bbb R^4/\Gamma_j\rightarrow \Bbb R$ induced by $\Gamma_j$-invariant $\psi: \Bbb R^4\rightarrow \Bbb R$,
\begin{eqnarray}\label{motherlove}
Y(S^4/\Gamma_j,[\hat{\psi}^2\hat{h}_j])<\frac{Y(S^4)}{\sqrt{|\Gamma_j|}}\ \ \  \textrm{and}\ \ \ \int_{\Bbb R^4/\Gamma_j} |W^+_{\hat{h}_j}|^2d\mu_{\hat{h}_j}<\frac{\epsilon}{2m|\Gamma_j|}.
\end{eqnarray}
We take a smooth compact-supported function $\varphi_j:S^4/\Gamma_j-\{\hat{q}_0\}\rightarrow \Bbb R$  such that
$$Y_{\psi^2\hat{h}_j}(\varphi_j)<\frac{Y(S^4)}{\sqrt{|\Gamma_j|}}$$
where $\hat{q}_0$ is the orbifold point corresponding to $q_0\in S^4$, and take a conformal change
of $(S^4/\Gamma_j-\{\hat{q}_0\},\hat{\psi}^2\hat{h}_j)$  such that the end is isometric to a cylindrical end $$(S^3/\Gamma_j\times [0,\infty), g_{_{\Bbb S^3}}+dt^2)$$ while the metric remains the same as $\hat{\psi}^2\hat{h}_j$ on $\textrm{supp}(\varphi_j)$.
We chop off the part $S^3/\Gamma_j\times [\frac{l}{2},\infty)$ and take the remaining part with the resulting metric denoted by $\mathfrak{h}_j$.
We still have
\begin{eqnarray}\label{KCDC}
Y_{\mathfrak{h}_j}(\varphi_j)<\frac{Y(S^4)}{\sqrt{|\Gamma_j|}}.
\end{eqnarray}

On the $M$ side, we take a open neighborhood $V_j$ of each $p_j$ such that $V_1,\cdots,V_m$ are all mutually disjoint, and apply Lemma \ref{IKim} successively with $p_j, V_j$ and $\frac{\epsilon}{m}$ for $j=1,\cdots,m$. Let the resulting metric be $\bar{g}_\epsilon$ and it satisfies
\begin{eqnarray}\label{teacherlove}
|Y(M,[g])-Y(M,[\bar{g}_\epsilon])|<\frac{\epsilon}{2}\ \ \ \ \textrm{and}\ \ \ \ |\mathcal{W}_+(M,[g])-\mathcal{W}_+(M,[\bar{g}_\epsilon])|<\frac{\epsilon}{2}.
\end{eqnarray}
Using the conformally-flatness around each $p_j$, one can take a conformal change of $(M-\{p_1,\cdots,p_j\},\bar{g}_\epsilon)$ around each $p_j$ so that the end is isometric to a cylindrical end $(S^3/\Gamma_j\times[0,\infty), g_{_{\Bbb S^3}}+dt^2)$ and the resulting metric remains the same as $\bar{g}_\epsilon$ on $(\cup_{j=1}^mV_j)^c$.
We again chop off the part $S^3/\Gamma_j\times [\frac{l}{2},\infty)$ of each end and take the remaining bulk part $M'$.

Now we can glue all the spherical parts with the metric $\mathfrak{h}_j$ for $j=1,\cdots,m$ to $M'$ to get a smooth orbifold metric $g_\epsilon$ on $M$ having $m$ cylinders of length $l$.

Since each $\varphi_j$ can be viewed as a function on $(M,g_\epsilon)$,  by (\ref{KCDC}) $$Y_{g_\epsilon}(\varphi_j)<\frac{Y(S^4)}{\sqrt{|\Gamma_j|}},$$ which implies that $$Y(M, [g_\epsilon])\leq \min_j Y_{g_\epsilon}(\varphi_j)<\min_j \frac{Y(S^4)}{\sqrt{|\Gamma_j|}}.$$

The point of constructing $g_\epsilon$ instead of using $g$ is that the Yamabe problem is solvable for $g_\epsilon$ and $\mathcal{W}_+(M,[g_\epsilon])$ is close enough to $\mathcal{W}_+(M,[g])$ such that
\begin{eqnarray}\label{duck}
|\int_M |W^+_{g_{\epsilon}}|^2d\mu_{g_{\epsilon}}-\int_M |W^+_{g}|^2d\mu_{g}| &<& \epsilon
\end{eqnarray}
as a consequence of the 2nd inequalities of (\ref{motherlove}) and (\ref{teacherlove}).

Moreover $Y(M,[g_\epsilon])$ is positive when the cylinder length $l$ is sufficiently large, because
both of $Y(S^4/\Gamma_j,[\hat{\psi}^2\hat{h}_j])$ and $Y(M,[\bar{g}_\epsilon])$ are positive.
Our construction is a standard procedure of connect-summing two manifolds of positive Yamabe constant to produce a manifold of positive Yamabe constant.
This can be proved in the same way as is done when proving the connected sum theorem of Yamabe constant stating that
for any $\epsilon>0$ there exists a metric $g_{_L}$ on $M_1\#M_2$ such that
$$Y(M_1\#M_2, [g_{_L}])> Y(M_1\amalg M_2, [g_1\amalg g_2])-\epsilon.$$
Here $g_1\amalg g_2$ denotes the metric of the disjoint union of $(M_1,g_1)$ and $(M_2,g_2)$, and $g_{_L}$ is constructed out of $g_1$ and $g_2$ in the same way as we did to contain the cylinder of length $L$, and any $g_l$ for $l\geq L$ satisfies the property. For details, one may refer to \cite{koba2}.\footnote{The theorem is originally stated for manifolds, but the proof works well for orbifolds too. If at least one of $Y(M_1,[g_1])$ and $Y(M_2,[g_2])$ is positive, then $Y(M_1\amalg M_2, [g_1\amalg g_2])=\min(Y(M_1,[g_1]),Y(M_2,[g_2]))$.}

We always assume that such large $l$ is taken, and with the resulting $g_{\epsilon}$ we proceed to estimate $\mathcal{W}_+(M, [g])$.

\medskip

\textbf{\underline{Proof of (i,ii,iii,iv)}}

\begin{Lemma}
Let $\pi:\tilde{M}\rightarrow M$ be a $k$-fold orbifold covering. Then
$$2\pi^2 (2\chi_{orb}(M)+3\tau_{orb}(M))-
\int_M |W^+_{g}|^2d\mu_{g}\leq \frac{8\pi^2}{k|\tilde{\Gamma}|}$$ where $\tilde{\Gamma}$ is an orbifold group of $\tilde{M}$ with the largest order.
\end{Lemma}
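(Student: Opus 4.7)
My plan is to mirror the proof of Lemma \ref{Carol}, with the one new ingredient being the approximation machinery (Lemma \ref{IKim} together with the cylindrical connect-sum construction carried out at the start of this section) to circumvent the possible non-solvability of the orbifold Yamabe problem on the cover.

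Set $d_{\mathrm{orb}} := 2\pi^2(2\chi_{\mathrm{orb}}(M)+3\tau_{\mathrm{orb}}(M)) - \int_M |W^+_g|^2\,d\mu_g$. The orbifold Chern--Gauss--Bonnet and Hirzebruch signature formulas reviewed in the preceding ``Topology of orbifolds'' subsection give
\[
d_{\mathrm{orb}} = \int_M\Bigl(\tfrac{s_g^2}{48}-\tfrac{|\stackrel{\circ}r_g|^2}{4}\Bigr)d\mu_g,
\]
and pulling back via $\pi$ together with the multiplicativity (\ref{Gan-YH}) of $\chi_{\mathrm{orb}}$ and $\tau_{\mathrm{orb}}$ under $k$-fold orbifold coverings yields
\[
\int_{\tilde M}\Bigl(\tfrac{s_h^2}{48}-\tfrac{|\stackrel{\circ}r_h|^2}{4}\Bigr)d\mu_h \;=\; k\,d_{\mathrm{orb}}
\]
for every $h\in[\pi^*g]$ (endowed with the pulled-back orientation), since the integrated combination on the left is conformally invariant. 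If $d_{\mathrm{orb}}\leq 0$ the conclusion is immediate, so I assume $d_{\mathrm{orb}}>0$; discarding the Ricci term then forces $\int_{\tilde M}s_h^2\,d\mu_h \geq 48k\,d_{\mathrm{orb}}$ throughout $[\pi^*g]$, and one also has $Y(\tilde M,[\pi^*g])>0$ because pulling back a positive-scalar-curvature representative of $[g]$ through $\pi$ remains a positive-scalar-curvature metric.

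At this point the manifold proof invoked (\ref{inflation}) directly. In the orbifold setting that formula is only guaranteed when the orbifold Yamabe problem is solvable, which may fail when the generalized Aubin inequality (\ref{Ahncheolsoo}) is saturated on $[\pi^*g]$. To sidestep this, I run on $(\tilde M,\pi^*g)$ the very construction used at the start of this section on $(M,g)$: apply Lemma \ref{IKim} around each orbifold point of $\tilde M$ and connect-sum in slightly-perturbed $S^4/\tilde\Gamma_j$ caps via long cylindrical necks. The resulting metric $g'_\epsilon$ on $\tilde M$ satisfies $Y(\tilde M,[g'_\epsilon])<\min_j Y(S^4)/|\tilde\Gamma_j|^{1/2}$ (strict Aubin, so Yamabe is solvable and (\ref{inflation}) applies), preserves the topological invariants $\chi_{\mathrm{orb}},\tau_{\mathrm{orb}}$ of $\tilde M$, and makes both $|Y(\tilde M,[\pi^*g])-Y(\tilde M,[g'_\epsilon])|$ and $|\mathcal W_+(\tilde M,[\pi^*g])-\mathcal W_+(\tilde M,[g'_\epsilon])|$ arbitrarily small as $\epsilon\to 0$. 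The curvature identity above then transfers to $[g'_\epsilon]$ with only an $o(1)$ loss, so (\ref{inflation}) produces $Y(\tilde M,[g'_\epsilon])^2 \geq 48k\,d_{\mathrm{orb}}-o(1)$.

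Finally, the generalized Aubin inequality applied to $[g'_\epsilon]$ provides the matching upper bound
\[
Y(\tilde M,[g'_\epsilon]) \;\leq\; \min_j\frac{Y(S^4)}{|\tilde\Gamma_j|^{1/2}} \;=\; \frac{Y(S^4)}{|\tilde\Gamma|^{1/2}},
\]
where $\tilde\Gamma$ is the orbifold group of $\tilde M$ of largest order. Combining with the previous lower bound and using $Y(S^4)^2=384\pi^2$ gives $48k\,d_{\mathrm{orb}}-o(1)\leq 384\pi^2/|\tilde\Gamma|$; letting $\epsilon\to 0$ yields $d_{\mathrm{orb}}\leq 8\pi^2/(k|\tilde\Gamma|)$, which is the claim. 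The main obstacle is precisely the Yamabe non-solvability on the cover, handled by the approximation step; every other step is a direct orbifold analog of the argument in Lemma \ref{Carol}.
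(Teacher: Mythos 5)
Your proof is correct and follows essentially the same strategy as the paper's: the conformally invariant curvature integral from the orbifold Gauss--Bonnet and signature formulas, its multiplicativity under the $k$-fold cover, the approximation machinery to force solvability of the orbifold Yamabe problem on the cover, formula (\ref{inflation}), and the generalized Aubin inequality. The only organizational difference is that the paper builds the approximating metric $g_\epsilon$ on $M$ once and then verifies (via the pulled-back test function $\pi^*\varphi_j$ supported in an evenly covered neighborhood) that $[\pi^*g_\epsilon]$ still satisfies the strict Aubin inequality on $\tilde{M}$, whereas you run the approximation directly on $(\tilde{M},\pi^*g)$; both routes work.
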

\begin{proof}
By (\ref{duck}), it is enough to show that the above inequality holds for $(M,g_{\epsilon})$ for any sufficiently small $\epsilon$.
Let $x\in \tilde{M}$ be an orbifold  point corresponding to $\tilde{\Gamma}$, and $\pi(x)$ be $p_j$ for some $j$.(If $\tilde{M}$ is a manifold, $\tilde{\Gamma}$ is trivial.)
An important thing in the construction of $g_{\epsilon}$ is that one can take the neighborhood $V_j$ around $p_j$ arbitrarily small and we take $V_j$ contained in an evenly covered neighborhood of $p_j$ so that we can take a local uniformizing chart $U/\Gamma_j$ of $V_j$ where the covering projection $\pi$ is given by the obvious projection map $U/\tilde{\Gamma}\rightarrow U/\Gamma_j$.

For
\begin{eqnarray*}
d&:=&2\pi^2 (2\chi_{orb}(M)+3\tau_{orb}(M))-
\int_M |W^+_{g_{\epsilon}}|^2d\mu_{g_{\epsilon}},
\end{eqnarray*}
$$d=\int_M
(\frac{s_{g_{\epsilon}}^2}{48}-\frac{|\stackrel{\circ}r_{g_{\epsilon}}|^2}{4})\
d\mu_{g_{\epsilon}},$$ and hence
$$ \int_{\tilde{M}}
(\frac{s_{\pi^*{g_{\epsilon}}}^2}{48}-\frac{|\stackrel{\circ}r_{\pi^*{g_{\epsilon}}}|^2}{4})\
d\mu_{\pi^*{g_{\epsilon}}}=kd.$$ For any metric $\tilde{h}\in [\pi^*g_{\epsilon}]$ and the pullback orientation (or its reversed one) on $\tilde{M}$,
\begin{eqnarray*}
\int_{\tilde{M}}
(\frac{s_{\tilde{h}}^2}{48}-\frac{|\stackrel{\circ}r_{\tilde{h}}|^2}{4})\
d\mu_{\tilde{h}}&=& 2\pi^2 (2\chi_{orb}(\tilde{M})+3\tau_{orb}(\tilde{M}))-
\int_{\tilde{M}} |W^+_{\tilde{h}}|^2d\mu_{\tilde{h}}\\  &=& 2\pi^2 (2\chi_{orb}(\tilde{M})+3\tau_{orb}(\tilde{M}))-
\int_{\tilde{M}} |W^+_{\pi^*{g_{\epsilon}}}|^2d\mu_{g_{\epsilon}}\\ &=& \int_{\tilde{M}}
(\frac{s_{\pi^*{g_{\epsilon}}}^2}{48}-\frac{|\stackrel{\circ}r_{\pi^*g_{\epsilon}}|^2}{4})\
d\mu_{\pi^*g_{\epsilon}}\\ &=&  kd.
\end{eqnarray*}
Thus if $d\geq 0$, then
 $$\inf_{\tilde{h}\in [\pi^*g_{\epsilon}]} (\int_{\tilde{M}}
s_{\tilde{h}}^2\ d\mu_{\tilde{h}})^{\frac{1}{2}}\geq 4\sqrt{3kd}.$$

Since $Y(M,[g_{\epsilon}])$ is positive, so is $Y(\tilde{M},[\pi^*g_{\epsilon}])$.
We claim that the Yamabe problem is solvable on $(\tilde{M},\pi^*g_{\epsilon})$ too.
Since $\pi^*\varphi_j$ is a smooth function on $\tilde{M}$ and $(\textrm{supp}(\varphi_j),\mathfrak{h}_j)$ is contained in $V_j$ which is again contained in an evenly covered neighborhood of $p_j$,
\begin{eqnarray*}
Y(\tilde{M},[\pi^*g_{\epsilon}])&\leq& Y_{\pi^*g_{\epsilon}}(\pi^*\varphi_j)\\ &=& \left(\frac{|\Gamma_j|}{|\tilde{\Gamma}|}\right)^{\frac{1}{2}}Y_{g_{\epsilon}}(\varphi_j)\\ &=&  \left(\frac{|\Gamma_j|}{|\tilde{\Gamma}|}\right)^{\frac{1}{2}}Y_{\mathfrak{h}_j}(\varphi_j)\\ &<&\frac{Y(S^4)}{|\tilde{\Gamma}|^{\frac{1}{2}}}
\end{eqnarray*}
by (\ref{KCDC}).
Thus the Yamabe problem is solvable on $(\tilde{M},\pi^*g_{\epsilon})$, which enables us to have
$$Y(\tilde{M},[\pi^*g_{\epsilon}])=\inf_{\tilde{h}\in [\pi^*g_{\epsilon}]} (\int_{\tilde{M}}
s_{\tilde{h}}^2\ d\mu_{\tilde{h}})^{\frac{1}{2}}.$$

Therefore we conclude that $$d\leq \frac{Y(\tilde{M},[\pi^*g_{\epsilon}])^2}{48k}\leq  \frac{8\pi^2}{k|\tilde{\Gamma}|}$$
by using $|Y(\tilde{M},[\pi^*g_{\epsilon}])|< \frac{Y(S^4)}{\sqrt{|\tilde{\Gamma}|}}=\frac{8\sqrt{6}\pi}{\sqrt{|\tilde{\Gamma}|}}$.
\end{proof}

Now the proofs of (i), (ii) (iii) are obtained by applying the above lemma with $\tilde{M}$ respectively being the universal orbifold covering space of $M$, the ordinary universal covering space, an ordinary covering space of degree $|H_1(M,\Bbb Z)|$. In case of (ii) and (iii), the corresponding covering $\tilde{M}$ is an ordinary covering so that $\tilde{M}$ and $M$ have the same orbifold singularity types. That's why we wrote $\Gamma_j$ in the 3rd terms of the inequalities.

The case of (iv) is an immediate corollary of (i) and (ii).
In fact, if $\pi_1(M)$ contains a subgroup of arbitrarily large finite index,
$M$ has ordinary covering spaces of arbitrarily large finite degree, so $\pi_1^{orb}(M)$ has corresponding a subgroup of arbitrarily large finite index, because an ordinary covering of $M$ is also an orbifold covering of $M$.

\medskip

\textbf{\underline{Proof of (v)}}

First when $(M,[g])$ satisfies the strict generalized Aubin's inequality, the Yamabe problem is solvable on it and hence for its Yamabe metric $\check{g}$
\begin{eqnarray*}
2\chi_{orb}(M)+3\tau_{orb}(M) &=& \frac{1}{4\pi^2}\int_M
(\frac{s_{\check{g}}^2}{24}+2|W^+_{\check{g}}|^2-\frac{|\stackrel{\circ}r_{\check{g}}|^2}{2})\
d\mu_{\check{g}}\\ &\leq& \frac{1}{4\pi^2}\int_M
(\frac{s_{\check{g}}^2}{24}+2|W^+_{\check{g}}|^2)\
d\mu_{\check{g}}\\ &=&
\frac{1}{4\pi^2}(\frac{Y(M,[g])^2}{24}+\int_M2|W^+_{g}|^2\
d\mu_{g})\\
&\leq& \frac{3}{4\pi^2}\int_M|W^+_{g}|^2\
d\mu_{g}
\end{eqnarray*}
where we used Theorem \ref{th4} at the last step.

Secondly, when $(M,[g])$ saturates the generalized Aubin's inequality,
the Yamabe problem is solvable on $(M,[g_{\epsilon}])$ and for its Yamabe metric $\breve{g}$ we get
\begin{eqnarray*}
2\chi_{orb}(M)+3\tau_{orb}(M) &=& \frac{1}{4\pi^2}\int_M
(\frac{s_{\breve{g}}^2}{24}+2|W^+_{\breve{g}}|^2-\frac{|\stackrel{\circ}r_{\breve{g}}|^2}{2})\
d\mu_{\breve{g}}\\ &\leq& \frac{1}{4\pi^2}\int_M
(\frac{s_{\breve{g}}^2}{24}+2|W^+_{\breve{g}}|^2)\
d\mu_{\breve{g}}\\ &=&
\frac{1}{4\pi^2}(\frac{Y(M,[g_{\epsilon}])^2}{24}+\int_M2|W^+_{\breve{g}}|^2\
d\mu_{\breve{g}})\\ &\leq&
\frac{1}{4\pi^2}(\frac{Y(M,[g])^2}{24}+\int_M2|W^+_{g_{\epsilon}}|^2\
d\mu_{g_{\epsilon}})\\ &<&
\frac{1}{4\pi^2}(\frac{Y(M,[g])^2}{24}+\int_M2|W^+_{g}|^2\
d\mu_{g}+2\epsilon)\\
&\leq& \frac{3}{4\pi^2}\int_M|W^+_{g}|^2\
d\mu_{g}+\frac{\epsilon}{2\pi^2}
\end{eqnarray*}
where we used Theorem \ref{th4} at the last step. By letting $\epsilon\rightarrow 0$, we get the desired inequality.

To check the equality case, suppose that the equality holds.
Then from the proofs of the above two cases, it must hold that $$\int_M|W^+_{g}|^2\ d\mu_{g}=\frac{1}{24}(Y(M,[g]))^2$$ and
by Theorem \ref{th4} $[g]$ has a K\"ahler Yamabe metric, say $\hat{g}$. Now
\begin{eqnarray*}
2\chi_{orb}(M)+3\tau_{orb}(M) &=& \frac{1}{4\pi^2}\int_M
(\frac{s_{\hat{g}}^2}{24}+2|W^+_{\hat{g}}|^2-\frac{|\stackrel{\circ}r_{\hat{g}}|^2}{2})\
d\mu_{\hat{g}}\\ &\leq& \frac{1}{4\pi^2}\int_M
(\frac{s_{\hat{g}}^2}{24}+2|W^+_{\hat{g}}|^2)\ d\mu_{\hat{g}}\\ &=& \frac{1}{4\pi^2}(\frac{1}{24}Y(M,[g])^2+\int_M
2|W^+_{g}|^2\ d\mu_{g})\\
&=& \frac{3}{4\pi^2}\int_M|W^+_{g}|^2\ d\mu_{g},
\end{eqnarray*}
implying that $\stackrel{\circ}r_{\hat{g}}=0$, i.e. $\hat{g}$ is Einstein.

Conversely, if $\hat{g}\in [g]$ is K\"ahler-Einstein, then
$\stackrel{\circ}r_{\hat{g}}=0$ and $s_{\hat{g}}=2\sqrt{6}|W^+_{\hat{g}}|$, from which
it follows that
\begin{eqnarray*}
2\chi_{orb}(M)+3\tau_{orb}(M) &=& \frac{1}{4\pi^2}\int_M
(\frac{s_{\hat{g}}^2}{24}+2|W^+_{\hat{g}}|^2-\frac{|\stackrel{\circ}r_{\hat{g}}|^2}{2})\
d\mu_{\hat{g}}\\ &=& \frac{3}{4\pi^2}\int_M|W^+_{\hat{g}}|^2\
d\mu_{\hat{g}}\\ &=& \frac{3}{4\pi^2}\int_M|W^+_{g}|^2\
d\mu_{g}.
\end{eqnarray*}
This completes the proof.

\medskip

\textbf{\underline{Proof of (vi)}}

The proof of the inequality is proved in the same way as the case (v).
First when $(M,[g])$ satisfies the strict generalized Aubin's inequality, the Yamabe problem is solvable on it and hence for its Yamabe metric $\check{g}$
\begin{eqnarray*}
2\chi_{orb}(M)+3\tau_{orb}(M) &=& \frac{1}{4\pi^2}\int_M
(\frac{s_{\check{g}}^2}{24}+2|W^+_{\check{g}}|^2-\frac{|\stackrel{\circ}r_{\check{g}}|^2}{2})\
d\mu_{\check{g}}\\ &\leq& \frac{1}{4\pi^2}\int_M
(\frac{s_{\check{g}}^2}{24}+2|W^+_{\check{g}}|^2)\
d\mu_{\check{g}}\\ &=&
\frac{1}{4\pi^2}(\frac{Y(M,[g])^2}{24}+\int_M2|W^+_{g}|^2\
d\mu_{g})\\
&\leq& \frac{3}{4\pi^2}\int_M|W^+_{g}|^2\
d\mu_{g}
\end{eqnarray*}
where we used Theorem \ref{th5} at the last step.

Secondly, when $(M,[g])$ saturates the generalized Aubin's inequality, the Yamabe problem is solvable on $(M,[g_{\epsilon}])$ and for its Yamabe metric $\breve{g}$ we get
\begin{eqnarray*}
2\chi_{orb}(M)+3\tau_{orb}(M) &=& \frac{1}{4\pi^2}\int_M
(\frac{s_{\breve{g}}^2}{24}+2|W^+_{\breve{g}}|^2-\frac{|\stackrel{\circ}r_{\breve{g}}|^2}{2})\
d\mu_{\breve{g}}\\ &\leq& \frac{1}{4\pi^2}\int_M
(\frac{s_{\breve{g}}^2}{24}+2|W^+_{\breve{g}}|^2)\
d\mu_{\breve{g}}\\ &=&
\frac{1}{4\pi^2}(\frac{Y(M,[g_{\epsilon}])^2}{24}+\int_M2|W^+_{\breve{g}}|^2\
d\mu_{\breve{g}})\\ &<&
\frac{1}{4\pi^2}(\frac{Y(M,[g])^2}{24}+\int_M2|W^+_{g}|^2\
d\mu_{g})+2\epsilon)\\
&\leq& \frac{3}{4\pi^2}\int_M|W^+_{g}|^2\
d\mu_{g}+\frac{\epsilon}{2\pi^2}
\end{eqnarray*}
where we used Theorem \ref{th5} at the last step. By letting $\epsilon\rightarrow 0$, we get the desired inequality.

To identify the equality cases, suppose the equality holds.
Then from the above proof, it must hold that $$\frac{(Y(M,[g]))^2}{24}=\int_M|W^+_{g}|^2\
d\mu_{g},$$ and then by Theorem \ref{th5} $g$ is a Yamabe metric with positive constant scalar curvature and $W_g^+$ is nonzero parallel with $$|\textrm{Spec}(W_g^+)|=2$$ at each point.
Since the Yamabe problem is now solvable on $(M,[g])$,
\begin{eqnarray*}
2\chi_{orb}(M)+3\tau_{orb}(M) &=& \frac{1}{4\pi^2}\int_M
(\frac{s_{g}^2}{24}+2|W^+_{g}|^2-\frac{|\stackrel{\circ}r_{g}|^2}{2})\
d\mu_{g}\\ &\leq& \frac{1}{4\pi^2}\int_M
(\frac{s_{g}^2}{24}+2|W^+_{g}|^2)\
d\mu_{\tilde{g}}\\ &=&
\frac{1}{4\pi^2}(\frac{Y(M,[g])^2}{24}+\int_M2|W^+_{g}|^2\
d\mu_{g})\\ &=&
\frac{3}{4\pi^2}\int_M|W^+_{g}|^2\
d\mu_{g},
\end{eqnarray*}
from which it follows that $\stackrel{\circ}r_{g}\equiv 0$.

Now we can apply Proposition 5 of \cite{derdzinski} which asserts that oriented Riemannian 4-manifold whose self-dual Weyl tensor $W_g^+$ is harmonic with at most 2 distinct eigenvalues at each point must be locally conformally K\"ahler.
Indeed $|W_g^+|^{\frac{2}{3}}g$ is locally K\"ahler with K\"ahler form $|W_g^+|^{\frac{2}{3}}\omega$ where $\omega$ is an eigenvector of $W_g^+$ with $|\omega|_g=\sqrt{2}$.
Since this is a local statement, it must hold for an orbifold $(M,g)$ too, and moreover the conformal factor is constant in our case.

Since there are locally only two choice of continuous $\omega$, the monodromy around any loop is in $\Bbb Z_2$. Therefore if the monodromy is trivial, then the K\"ahler form is globally well-defined on $(M,g)$, and otherwise it is well-defined on the double cover corresponding to the nontrivial monodromy. In the latter case, the covering transformation is isometric but anti-holomorphic.

Conversely, if $(M,g)$ or its double cover is a K\"ahler-Einstein orbifold, the equality is obtained by applying the case (v) to $(M,g)$ or the double cover of it.
This completes the proof.

\begin{Remark}
We could not characterize the cases where the inequality of (i) (or (ii)) is saturated. It is certain from the above proof that if its universal orbifold (or ordinary) cover admits an Einstein orbifold metric with Yamabe constant saturating the generalized Aubin's inequality, it certainly attains the equality.
However the classification of 4-orbifolds saturating the generalized Aubin's inequality is not obtained yet. There do exist such 4-orbifolds other than the quotients of a round 4-sphere.

Theorem \ref{th3} also holds when $Y(M,[g])=0$. But in that case we have a better lower bound given in (\ref{ggg-1}).

\end{Remark}

\section{Application and examples}

\subsection{Application to Einstein metric}

The following application to Einstein 4-orbifolds can be proved in the same way as the manifold case in \cite{gur}.
\begin{Corollary}\label{th3-1}
Let $M$ be a smooth closed oriented 4-orbifold and $g_t$ for $t\in (-a,a)$ be a smooth family of Einstein orbifold metrics with nonnegative scalar curvature on $M$. If $g_0$ is K\"ahler, then so is any other $g_t$.
\end{Corollary}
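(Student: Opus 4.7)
The plan follows Gursky's manifold argument \cite{gur} by the open-and-closed trick applied to
$$T:=\{t\in(-a,a): g_t\text{ is K\"ahler}\}.$$
Since $0\in T$ and $(-a,a)$ is connected, it suffices to show that $T$ is both open and closed.

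For closedness, given $t_n\to t_*$ with each $g_{t_n}$ K\"ahler, I would normalize the parallel K\"ahler forms $\omega_{t_n}$ to have pointwise norm $\sqrt{2}$. Smooth convergence of the metrics together with elliptic regularity of the harmonic-form equation give a smooth subsequential limit $\omega_{t_*}$, and the identity $\nabla^{g_{t_n}}\omega_{t_n}\equiv 0$ passes to the limit, exhibiting $g_{t_*}$ as K\"ahler.

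For openness, fix $t_0\in T$. Since $g_{t_0}$ is K\"ahler we have $b_2^+(M)\ge 1$, which is a topological invariant. The Einstein identity $\stackrel{\circ}r_{g_t}\equiv 0$ inserted into the orbifold Gauss--Bonnet--Hirzebruch formula (\ref{Choomiae}) gives
$$\mathcal{W}_+(M,[g_t])=2\pi^2\bigl(2\chi_{orb}(M)+3\tau_{orb}(M)\bigr)-\tfrac{1}{48}\int_M s_{g_t}^2\,d\mu_{g_t}.$$
By Obata's theorem, applicable near $t_0$ by continuity and positivity of $s_{g_{t_0}}$ (in the non-Ricci-flat case), $g_t$ is its own Yamabe representative, so $\int_M s_{g_t}^2\,d\mu_{g_t}=Y(M,[g_t])^2$. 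Inserting $Y^2\le 24\,\mathcal{W}_+$ from Theorem \ref{th4} yields
$$\mathcal{W}_+(M,[g_t])\ge\tfrac{4\pi^2}{3}\bigl(2\chi_{orb}(M)+3\tau_{orb}(M)\bigr),$$
with equality characterizing K\"ahler metrics among Einstein ones (via the equality case of Theorem \ref{th4} and Obata rigidity). At $t_0$ this is an equality, so the smooth nonnegative function $t\mapsto \mathcal{W}_+(M,[g_t])-\tfrac{4\pi^2}{3}(2\chi_{orb}+3\tau_{orb})$ vanishes at $t_0$. I would then perturb the K\"ahler form along the family by projecting onto the $g_t$-harmonic self-dual 2-forms, and invoke the Weitzenb\"ock identity at the saturated refined Kato inequality to conclude that the perturbed form remains $g_t$-parallel, giving K\"ahler structure for $t$ near $t_0$. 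The Ricci-flat case $s_{g_0}=0$ is immediate: Einstein then forces $s_{g_t}\equiv 0$ along the family, and a Ricci-flat Einstein 4-orbifold with $b_2^+\ge 1$ is hyperk\"ahler.

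The main obstacle is the final step in the openness argument: upgrading the pointwise equality in Theorem \ref{th4} at $t_0$ into openness of $T$ via a deformation of the K\"ahler form. This is Gursky's manifold argument, which transfers cleanly to the orbifold setting because all the Bochner and variational computations involved are pointwise tensorial identities valid on the smooth part of the orbifold and globalize by continuity.
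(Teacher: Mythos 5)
Your closedness argument is fine, but the openness step is where all the content lies, and as written it has a genuine gap. From the Einstein condition and (\ref{Choomiae}) you correctly get $\mathcal{W}_+(M,[g_t])=2\pi^2(2\chi_{orb}(M)+3\tau_{orb}(M))-\tfrac{1}{48}\int_M s_{g_t}^2\,d\mu_{g_t}$, and Theorem \ref{th4} plus Obata gives $\mathcal{W}_+(M,[g_t])\geq\tfrac{1}{24}\int_M s_{g_t}^2\,d\mu_{g_t}$; together these say $H(t):=\int_M s_{g_t}^2\,d\mu_{g_t}$ is bounded above by $32\pi^2(2\chi_{orb}(M)+3\tau_{orb}(M))$, with equality exactly when $g_t$ is conformally K\"ahler--Einstein. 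So your set $T$ is the level set where a continuous function attains its maximum --- closed, not open --- and knowing the equality at $t_0$ tells you nothing about nearby $t$. The proposed fix (project the K\"ahler form onto the $g_t$-harmonic self-dual forms and ``invoke the Weitzenb\"ock identity at the saturated refined Kato inequality'') is circular: to conclude that the perturbed harmonic form is parallel you need the integrated Weitzenb\"ock/Kato inequality to be saturated at $t$, which is precisely the equality $H(t)=\max$ you are trying to establish. The missing idea, which is the heart of the paper's proof, is that $H(t)=\mathfrak{F}(g_t)^2$ and each $g_t$ is a critical point of the normalized Einstein--Hilbert functional $\mathfrak{F}$, so $H'(t)\equiv 0$ and $H$ is \emph{constant}; hence $\mathcal{W}_+(M,[g_t])$ is constant and the equality in Theorem \ref{th3}(v) holds for every $t$ simultaneously, making the open-and-closed machinery unnecessary. (This is also what Gursky actually does in the manifold case.)

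Two further points. First, the equality case of Theorem \ref{th3}(v) only gives that $g_t$ is \emph{conformal} to a K\"ahler--Einstein orbifold metric; the paper must still argue the conformal factor is constant, which it does via the equality case of (vi) (so $(M,g_t)$ or its double cover is K\"ahler) and the fact that two conformally equivalent constant-scalar-curvature K\"ahler metrics on a 4-orbifold are homothetic because $|s|=2\sqrt{6}|W^+|$. Your proposal skips this step. Second, your dismissal of the Ricci-flat case rests on the claim that a Ricci-flat 4-orbifold with $b_2^+\geq 1$ is hyperk\"ahler; for a harmonic (rather than parallel) self-dual form the Weitzenb\"ock formula with $s=0$ gives $\int_M|\nabla\psi|^2=2\int_M W^+(\psi,\psi)$ with no sign control, so this does not follow and should not be asserted. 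The constancy-of-$H$ argument handles $s\equiv 0$ uniformly with the positive case.
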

\begin{proof}
Define a smooth function $F:(-a,a)\rightarrow \Bbb R$ by $$F(t)=\frac{(\int_Ms_{g_t}d\mu_{g_t})^2}{\int_Md\mu_{g_t}}=\int_Ms^2_{g_t}d\mu_{g_t}.$$
Since $g_t$ is Einstein, it is a critical point of the normalized Einstein-Hilbert action $\mathfrak{F}$. (This is originally stated for a smooth closed manifold by Hilbert and its proof in \cite{Be} still works well for an orbifold.) So $F'(t)$ for all $t$ is zero, meaning that $F$ is constant.
Combining it with
\begin{eqnarray*}
2\pi^2 (2\chi_{orb}(M)+3\tau_{orb}(M))&=&\int_M (|W^+_{g_t}|^2+\frac{s_{g_t}^2}{48}-\frac{|\stackrel{\circ}r_{g_t}|^2}{4})\
d\mu_{g_t}\\ &=&\int_M (|W^+_{g_t}|^2+\frac{s_{g_t}^2}{48})\ d\mu_{g_t}\\ &=& \mathcal{W}_+(M,[g_t])+\frac{F(t)}{48},
\end{eqnarray*}
we get that $\mathcal{W}_+(M,[g_t])$ is constant too.

Since $g_0$ is K\"ahler-Einstein,  by (v) of Theorem \ref{th3}
\begin{eqnarray*}
\frac{4\pi^2}{3}(2\chi_{orb}(M)+3\tau_{orb}(M))&=& \mathcal{W}_+(M,[g_0])\\ &=& \mathcal{W}_+(M,[g_t]),
\end{eqnarray*}
and hence $g_t$ is conformal to a K\"ahler-Einstein orbifold metric. We need to show that this conformal factor is constant.

Since each $g_t$ is Einstein and the equality holds in (vi) of Theorem \ref{th3}, $(M,g_t)$ or its double cover with pull-back metric is K\"ahler.
It only remains to deal with the latter case. 
In that case the double cover now has two conformally equivalent metrics which are K\"ahler-Einstein. These two orbifold metrics must be homothetic by the following general fact :

Two K\"ahler metrics of constant scalar curvature on a 4-orbifold are conformally equivalent iff they are homothetic.\footnote{It's because any K\"ahler metric on a 4-orbifold satisfies $|s|=2\sqrt{6}|W^+|$.}
\end{proof}

\subsection{Self-dual orbifold with positive scalar curvature}

Theorem \ref{th3} gives topological constraints for the existence of a self-dual orbifold metric with positive Yamabe constant.

\begin{Theorem}\label{Gan-YH-1}
Let $(M,g)$ be a smooth closed oriented self-dual 4-orbifold having $Y(M,[g])>0$.
\begin{description}
 \item[(i)] If $b_2^+(M)>0$, then $$\chi_{orb}(M)\leq 3\tau_{orb}(M)$$ where the equality  holds iff $g$ is conformal to an orbifold K\"ahler-Einstein  metric.
 \item[(ii)] If $\pi_1(M)$ or $\pi_1^{orb}(M)$ contains a subgroup of arbitrarily large finite index, then $$2\chi_{orb}(M)\leq 3\tau_{orb}(M).$$
 \item[(iii)] If $\delta_gW_g^+=0$ for nonzero $W_g^+$, then $$\chi_{orb}(M)\leq 3\tau_{orb}(M)$$ where the equality holds iff $(M,g)$ is K\"ahler or the quotient of a K\"ahler orbifold by a free anti-holomorphic isometric involution.
\end{description}
\end{Theorem}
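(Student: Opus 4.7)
The plan is to exploit the self-duality condition $W_g^- \equiv 0$ to convert the Weyl-functional inequalities of Theorem \ref{th3} into topological inequalities involving only $\chi_{orb}(M)$ and $\tau_{orb}(M)$. Specifically, since $\mathcal{W}_-(M,[g]) = 0$, formula (\ref{knuefaculty}) immediately gives
$$\mathcal{W}_+(M,[g]) = 12\pi^2 \tau_{orb}(M),$$
so the whole content of each assertion is obtained by substituting this equality into the appropriate part of Theorem \ref{th3}.

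For (i), I would invoke part (v) of Theorem \ref{th3} under the hypothesis $b_2^+(M) > 0$ to get
$$12\pi^2 \tau_{orb}(M) \;\geq\; \tfrac{4\pi^2}{3}(2\chi_{orb}(M) + 3\tau_{orb}(M)),$$
which simplifies to $\chi_{orb}(M) \leq 3\tau_{orb}(M)$. For (ii), I would apply part (iv) of Theorem \ref{th3} under the arbitrarily-large-finite-index hypothesis on $\pi_1(M)$ or $\pi_1^{orb}(M)$ to get $12\pi^2\tau_{orb}(M) \geq 2\pi^2(2\chi_{orb}(M)+3\tau_{orb}(M))$, yielding $2\chi_{orb}(M) \leq 3\tau_{orb}(M)$. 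For (iii), the hypothesis $\delta_g W_g^+=0$ with $W_g^+ \not\equiv 0$ is exactly the setup of part (vi) of Theorem \ref{th3}, and the same algebraic manipulation as in (i) yields $\chi_{orb}(M) \leq 3\tau_{orb}(M)$.

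For the equality characterizations in (i) and (iii), I would simply transport the rigidity statements of parts (v) and (vi) of Theorem \ref{th3}: equality in (i) forces $g$ to be conformal to a K\"ahler--Einstein orbifold metric, and equality in (iii) forces $(M,g)$ to be Einstein and either K\"ahler or the quotient of a K\"ahler orbifold by a free anti-holomorphic isometric involution. In the converse direction, if such a K\"ahler--Einstein structure exists on $M$ (or its double cover) then the equality case of Theorem \ref{th3}(v) or (vi) is realized, and combined with $\mathcal{W}_+(M,[g]) = 12\pi^2\tau_{orb}(M)$ this yields equality in $\chi_{orb}(M) = 3\tau_{orb}(M)$.

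There is no genuine obstacle here; the argument is a one-line substitution once the heavy lifting of Theorem \ref{th3} is in hand. The only mild point worth being careful about is that in (iii) the hypothesis explicitly assumes $W_g^+ \not\equiv 0$, so there is no degenerate sub-case to treat separately: if $W_g^+ \equiv 0$ as well, then $M$ is conformally flat and the inequality $\chi_{orb}(M) \leq 3\tau_{orb}(M)$ falls outside the scope of (iii). The verification of the converse for the equality cases also needs a brief mention that a K\"ahler--Einstein self-dual metric automatically has $\tau_{orb}(M) = \frac{1}{12\pi^2}\mathcal{W}_+(M,[g])$, so the two-way implication with the bound from Theorem \ref{th3} is clean.
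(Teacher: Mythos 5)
Your proposal is correct and follows essentially the same route as the paper: self-duality plus the signature formula (\ref{knuefaculty}) give $\mathcal{W}_+(M,[g])=12\pi^2\tau_{orb}(M)$, and substituting this into parts (v), (iv), (vi) of Theorem \ref{th3} yields (i), (ii), (iii) respectively, with the equality characterizations inherited directly. Nothing further is needed.
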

\begin{proof}
The proof is almost immediate from Theorem \ref{th3}. If $M$ admits a self-dual metric $g$, then from (\ref{knuefaculty}) we have $$\mathcal{W}_+(M,[g])= 12\pi^2\tau_{orb}(M).$$ To prove (i), we apply (v) of Theorem \ref{th3} and get
\begin{eqnarray}\label{son&kane}
12\pi^2\tau_{orb}(M)\geq \frac{4\pi^2}{3}(2\chi_{orb}(M)+3\tau_{orb}(M))
\end{eqnarray}
which simplifies to the desired inequality, and the equality condition is inherited from that of (v) of Theorem \ref{th3}.

(ii) and (iii) can be proved in the same way by using (iv) and (vi) of Theorem \ref{th3} respectively.

\end{proof}

\begin{Corollary}\label{cor1}
Let $M$ be a smooth closed oriented 4-orbifold. Suppose that either $b_2^+(M)>0$ or that $\pi_1(M)$ or $\pi_1^{orb}(M)$ contains a subgroup of arbitrarily large finite index. Then there exists $n_0\in \Bbb N$ such that $M_n:=M\# n(S^2\times S^2)$ for any $n\geq n_0$ never admits a self-dual orbifold metric of positive Yamabe constant.
\end{Corollary}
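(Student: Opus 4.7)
The plan is to reduce the corollary to a contradiction with the topological inequalities already established in Theorem \ref{Gan-YH-1}, by tracking how $\chi_{orb}$ and $\tau_{orb}$ behave under connected sum with $S^2 \times S^2$, and by verifying that the hypothesis propagates from $M$ to $M_n$.

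First I would compute the orbifold invariants of $M_n$. Because the connected sum is performed at smooth (non-orbifold) points, the orbifold singular locus of $M_n$ coincides with that of $M$, so the correction terms $\sum_i (1 - 1/|\Gamma_i|)$ and $\sum_i \eta(S^3/\Gamma_i)$ in the defining formulas for $\chi_{orb}$ and $\tau_{orb}$ are unchanged. Combined with the classical identities $\chi(X \# Y) = \chi(X) + \chi(Y) - 2$ and $\tau(X \# Y) = \tau(X) + \tau(Y)$, and with $\chi(S^2 \times S^2) = 4$, $\tau(S^2 \times S^2) = 0$, this gives
$$\chi_{orb}(M_n) = \chi_{orb}(M) + 2n, \qquad \tau_{orb}(M_n) = \tau_{orb}(M).$$

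Next I would verify that the appropriate hypothesis of Theorem \ref{Gan-YH-1} holds for $M_n$. If $b_2^+(M) > 0$, then since $S^2 \times S^2$ contributes $1$ to $b_2^+$, we have $b_2^+(M_n) = b_2^+(M) + n > 0$, placing us in case (i) of Theorem \ref{Gan-YH-1}. If instead $\pi_1^{orb}(M)$ (respectively $\pi_1(M)$) contains subgroups of arbitrarily large finite index, then since $S^2 \times S^2$ is simply connected and the connected sum is taken at smooth points, an orbifold Seifert--van Kampen argument yields $\pi_1^{orb}(M_n) = \pi_1^{orb}(M)$ (resp. $\pi_1(M_n) = \pi_1(M)$), placing us in case (ii) of Theorem \ref{Gan-YH-1}.

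Now suppose for contradiction that $M_n$ admits a self-dual orbifold metric of positive Yamabe constant. Then Theorem \ref{Gan-YH-1}(i) or (ii) forces, respectively,
$$\chi_{orb}(M) + 2n \leq 3\tau_{orb}(M) \qquad \text{or} \qquad 2\chi_{orb}(M) + 4n \leq 3\tau_{orb}(M).$$
Since the right-hand sides are fixed and the left-hand sides grow linearly in $n$, both inequalities must fail once $n$ exceeds $n_0 := \max\bigl\{\,\lceil (3\tau_{orb}(M) - \chi_{orb}(M))/2 \rceil,\ \lceil (3\tau_{orb}(M) - 2\chi_{orb}(M))/4 \rceil\,\bigr\} + 1$. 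This yields the desired non-existence.

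The only nontrivial point in this plan---more a careful verification than a real obstacle---is confirming that the orbifold fundamental group is preserved under connected sum with a simply-connected smooth 4-manifold when the gluing disks avoid the orbifold singular locus; this requires adapting Seifert--van Kampen to the orbifold setting, but is by now standard (see e.g. \cite{Thurston, SChoi}). All remaining ingredients are direct additivity calculations and an appeal to Theorem \ref{Gan-YH-1}.
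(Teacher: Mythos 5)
Your proposal is correct and follows essentially the same route as the paper: compute $\chi_{orb}(M_n)-3\tau_{orb}(M_n)=\chi_{orb}(M)-3\tau_{orb}(M)+2n$ and $2\chi_{orb}(M_n)-3\tau_{orb}(M_n)=2\chi_{orb}(M)-3\tau_{orb}(M)+4n$, note both become positive for large $n$, and invoke Theorem \ref{Gan-YH-1} (i) and (ii). Your explicit check that the hypotheses propagate to $M_n$ (via $b_2^+(M_n)=b_2^+(M)+n$ and the orbifold Seifert--van Kampen theorem) is a welcome detail that the paper leaves implicit, but it does not change the argument.
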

\begin{proof}
This is proved by the direct application of (i) and (ii) of the above theorem.
By using the computation of $\chi$ and $\tau$ under a connected sum,
\begin{eqnarray*}
\chi_{orb}(M_n)- 3\tau_{orb}(M_n) &=& (\chi_{orb}(M)+n\chi_{orb}(S^2\times S^2)-2n)\\ & & -3(\tau_{orb}(M)+\tau_{orb}(S^2\times S^2))\\
&=& \chi_{orb}(M)-3\tau_{orb}(M)+2n
\end{eqnarray*}
and
\begin{eqnarray*}
2\chi_{orb}(M_n)- 3\tau_{orb}(M_n)
&=& 2\chi_{orb}(M)-3\tau_{orb}(M)+4n,
\end{eqnarray*}
both of which are positive for any sufficiently large $n$.
\end{proof}

\begin{Example}\label{Pastor-HYJ}
Consider a $\Bbb Z_2$-action on $S^4=\{(x_1,\cdots,x_5)\in \Bbb R^5|\sum_ix_i^2=1\}$ given by the orientation-preserving diffeomorphism $$(x_1,\cdots,x_5)\mapsto (-x_1,\cdots,-x_4,x_5).$$
It's quotient  $S^4/\Bbb Z_2$ is a smooth oriented orbifold with 2 orbifold points.
Also consider  a $\Bbb Z_2$-action on $S^2\times S^2=\{(x_1,\cdots,x_6)\in \Bbb R^6|\sum_{i=1}^3x_i^2=\sum_{i=4}^6x_i^2=1\}$
given by the orientation-preserving diffeomorphism $$(x_1,\cdots,x_6)\mapsto (-x_1,-x_2,x_3,-x_4,-x_5,x_6).$$
It's quotient  $(S^2\times S^2)/\Bbb Z_2$ is also a smooth oriented orbifold with 4 orbifold points.

We claim that the connected sum $$X:=S^4/\Bbb Z_2\ \# \ (S^2\times S^2)/\Bbb Z_2$$ never admits a self-dual orbifold metric of positive scalar curvature, although  $S^4/\Bbb Z_2$ does obviously.
By the Seifert–Van Kampen theorem which still holds for the orbifold fundamental group \cite{SChoi},
\begin{eqnarray*}
\pi_1^{orb}(X)&=& \pi_1^{orb}(S^4/\Bbb Z_2)*\pi_1^{orb}((S^2\times S^2)/\Bbb Z_2)\\ &=& \Bbb Z_2*\Bbb Z_2
\end{eqnarray*}
which obviously contains a subgroup of arbitrarily large finite index.\footnote{In $\Bbb Z_2*\Bbb Z_2=\langle a,b|a^2=b^2=1\rangle$, the subgroup generated by $(ab)^d$ for $d\in \Bbb N$ has index $2d$.}

By using the formula (\ref{Gan-YH})
\begin{eqnarray*}
2\chi_{orb}(X)-3\tau_{orb}(X)&=& 2(\chi_{orb}(S^4/\Bbb Z_2)+\chi_{orb}((S^2\times S^2)/\Bbb Z_2)-2)\\ & &-3(\tau_{orb}(S^4/\Bbb Z_2)+\tau_{orb}((S^2\times S^2)/\Bbb Z_2))\\
&=& 2(1+2-2)-3(0+0)\\ &>& 0.
\end{eqnarray*}
Therefore $X$ cannot support a self-dual orbifold metric of positive Yamabe constant by (ii) of Theorem \ref{Gan-YH-1}, and hence the desired conclusion obviously follows.

Nevertheless $X$ has an orbifold metric of positive scalar curvature, because $S^4/\Bbb Z_2$ admits a (conformally-flat) orbifold metric of positive scalar curvature and $(S^2\times S^2)/\Bbb Z_2$ admits a (K\"ahler-Einstein) orbifold metric $g_{_{KE}}$ with positive scalar curvature.

Let's try to estimate $\nu_+(X)$. By Theorem \ref{th3} (iv)
\begin{eqnarray*}
\nu_+(X)&\geq& -12\pi^2\tau_{orb}(X)+2\cdot 2\pi^2(2\chi_{orb}(X)+3\tau_{orb}(X))\\ &=& 8\pi^2.
\end{eqnarray*}

Note that for any $n$-orbifolds $M_1$ and $M_2$
\begin{eqnarray}\label{vaccine-president}
\nu_+(M_1\# M_2)\leq \nu_+(M_1)+\nu_+(M_2)
\end{eqnarray}
which can be proved in the same way as the $\nu$ invariant \cite{koba}.

By using this formula and  Theorem \ref{th3} (v)
\begin{eqnarray*}
\nu_+(X)&\leq& \nu_+(S^4/\Bbb Z_2)+\nu_+((S^2\times S^2)/\Bbb Z_2) \\ &=& 0-12\pi^2\tau_{orb}((S^2\times S^2)/\Bbb Z_2)+
2\mathcal{W}_+((S^2\times S^2)/\Bbb Z_2,[g_{_{KE}}])\\ &=&\frac{32\pi^2}{3}.
\end{eqnarray*}
It remains a problem to exactly compute $\nu_+(M)$ and $\nu(M)$.
\end{Example}

\subsection{Optimality of our estimate of $\mathcal{W}_+$}

Here we show that there exist examples where our estimate of $\mathcal{W}_+$ gives a better optimized lower bound of $\mathcal{W}_+$ than any previously known bounds.
We will do this by constructing a 4-manifold $(M,g)$ such that $Y(M,[g])$ is positive and $\pi_1(M)$ contains a subgroup of arbitrarily large finite index so that $2\pi^2(2\chi(M)+3\tau(M))$ of Theorem \ref{th2} (iii) is applicable but it has $b_1(M)=0$.
So without our estimate the inequalities available to such $M$ are
(\ref{thank}), the part (i) of Gursky's Theorem \ref{th1}, and
\begin{eqnarray}\label{Hirze}
\mathcal{W}_+(M,[g])\geq\mathcal{W}_+(M,[g])-\mathcal{W}_-(M,[g])= 12\pi^2\tau(M)
\end{eqnarray}
which is the equivalent of (\ref{DSLee}).
And if $b_2^-(M)>0$, one can also apply the part (i) of Theorem \ref{th1} to the reverse-oriented manifold $\overline{M}$ to get
$$\mathcal{W}_-(M,[g])\geq \frac{4\pi^2}{3}(2\chi(M)-3\tau(M))$$
so that
\begin{eqnarray}\label{snufaculty}
\mathcal{W}_+(M,[g])&=& 12\pi^2\tau(M)+\mathcal{W}_-(M,[g])\nonumber\\ &\geq& \frac{8\pi^2}{3}(\chi(M)+3\tau(M)).
\end{eqnarray}
This gives a better bound than the bound (\ref{thank}) which says that
$$\mathcal{W}_+(M,[g])\geq 2\pi^2(\chi(M)+3\tau(M)).$$

Thus we need to check when the bound $2\pi^2(2\chi(M)+3\tau(M))$ is strictly greater than those in the part (i) of Theorem \ref{th1}, (\ref{Hirze}), and (\ref{snufaculty}).
Since
$$2\pi^2(2\chi(M)+3\tau(M)) >\frac{4\pi^2}{3}(2\chi(M)+3\tau(M))\ \ \ \Longleftrightarrow\ \ \  2\chi(M)+3\tau(M)> 0,$$
$$ 2\pi^2(2\chi(M)+3\tau(M))> 12\pi^2\tau(M)\ \ \ \Longleftrightarrow\ \ \ 2\chi(M)-3\tau(M)> 0,$$
$$ 2\pi^2(2\chi(M)+3\tau(M))> \frac{8\pi^2}{3}(\chi(M)+3\tau(M))\ \ \ \Longleftrightarrow\ \ \ 2\chi(M)-3\tau(M)> 0,$$
the condition
\begin{eqnarray}\label{GanYH-0}
2\chi(M)\pm 3\tau(M)>0
\end{eqnarray}
is all-sufficient.

We now construct an explicit example of smooth closed oriented Riemannian 4-manifold $(M,g)$ satisfying $Y(M,[g])>0$, $2\chi(M)\pm 3\tau(M)>0$, and $b_1(M)=0$,
while $\pi_1(M)$ contains a subgroup of arbitrarily large finite index.
Recall that $S^2\times S^2$ with the standard product metric has a smooth free orientation-preserving isometric $\Bbb Z_2$-action given by antipodal maps of each $S^2$ factor. Then the connected sum $$N':=(S^2\times S^2)\# 2N$$ for any oriented 4-manifold $N$ where two $N$ are glued around any $\Bbb Z_2$-orbit also has an induced $\Bbb Z_2$-action in the obvious way. We require that $N$ is simply-connected and has a metric of positive scalar curvature so that we can endow $N'$ with a metric of positive scalar curvature by the well-known Gromov-Lawson surgery \cite{Gromov}. By doing the surgery in the $\Bbb Z_2$-equivariant way, the $\Bbb Z_2$-action on $N'$ can be made isometric too.

Let $N''$ be the quotient of $N'$ by this $\Bbb Z_2$-action and define $M$ to be the connected sum $kN''$ for any $k\geq 2$. Since $N''$ admits a metric of positive scalar curvature, so does $M$ again by the Gromov-Lawson theorem. By the Seifert–Van Kampen theorem $\pi_1(M)$ is the $k$-fold free product $\Bbb Z_2*\cdots *\Bbb Z_2$ of $\Bbb Z_2$, and hence $b_1(M)$ is 0. Because $k\geq 2$, $\pi_1(M)$  has a subgroup of arbitrarily large finite index.\footnote{When $k=2$, we have already seen it in the previous subsection. Similarly when $k\geq 3$, if $a_i$ denotes a generator of the $i$-th $\Bbb Z_2$, then the subgroup generated by $\{(a_1a_2)^d,a_3,\cdots,a_k\}$ for $d\in \Bbb N$ has index $2d$.}
A simple computation shows
\begin{eqnarray*}
2\chi(M)\pm 3\tau(M)&=& 2(k\chi(N'')-(k-1)2)\pm 3k\tau(N'')\\ &=& k(2\chi(N'')\pm 3\tau(N''))-4(k-1)\\&=&
\frac{k}{2}(2\chi(N')\pm 3\tau(N'))-4(k-1)\\ &=& \frac{k}{2}(2(4+2\chi(N)-2\cdot 2)\pm 3(0+2\tau(N)))-4(k-1)\\ &=& k(2\chi(N)\pm 3\tau(N))-4(k-1).
\end{eqnarray*}

For $M$ to satisfy (\ref{GanYH-0}), one may take $N$ to be $$n(S^2\times S^2)\ \ \ \ \textrm{or}\ \ \ \ l\Bbb CP^2\# m\overline{\Bbb CP}^2$$ for any nonnegative integers $n,l,m$ satisfying $\frac{l}{5}-\frac{4}{5k}< m < 5l+\frac{4}{k}$, both of which have positive scalar curvature metrics.
If $l+m\geq 1$, $S^2\times S^2\# l\Bbb CP^2\# m\overline{\Bbb CP}^2$ is diffeomorphic to $(l+1)\Bbb CP^2\# (m+1)\overline{\Bbb CP}^2$, and hence $M$ is diffeormophic to the $k$-fold connected sum of $((2l+1)\Bbb CP^2\# (2m+1)\overline{\Bbb CP}^2)/\Bbb Z_2$.

\begin{Remark}
Since the above manifolds $M$ satisfy (\ref{GanYH-0}), $M$ never supports a self-dual metric of positive Yamabe constant by (ii) of Theorem \ref{Gan-YH-1}.
\end{Remark}

\subsection{Some exact computations of $\nu$ and $\nu_+$}\label{coffee2u}

A basic inequality for $\nu$ and $\nu_+$ following from (\ref{knuefaculty}) is
\begin{eqnarray}\label{DSLee-999}
\nu_+(M)\geq \nu(M)\geq 12\pi^2\tau_{orb}(M),
\end{eqnarray}
where $\nu(M)= 12\pi^2\tau_{orb}(M)$ holds if $M$ admits a self-dual orbifold metric and
$\nu_+(M)= 12\pi^2\tau_{orb}(M)$  if $M$ admits a self-dual orbifold metric of positive Yamabe constant.

\begin{Theorem}
Let $M_1,\cdots,M_k$ and $X$ be smooth closed oriented 4-orbifolds such that all $M_i$ admit self-dual metrics and $\nu(X)=\tau_{orb}(X)=0$.
Then $M:=M_1\# \cdots\# M_k\# X$ has $$\nu(M)=12\pi^2\sum_i\tau_{orb}(M_i).$$
Furthermore if all $M_i$ admit self-dual metrics of positive Yamabe constant and $\nu_+(X)$ is 0, then $\nu_+(M)$ has the same value.
\end{Theorem}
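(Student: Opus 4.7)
The plan is to sandwich each of $\nu(M)$ and $\nu_+(M)$ between matching lower and upper bounds. The lower bound is immediate from (\ref{DSLee-999}) combined with additivity of the orbifold signature under connected sum: since each connect-sum point contributes nothing to $\tau_{orb}$, we have $\tau_{orb}(M)=\sum_i\tau_{orb}(M_i)+\tau_{orb}(X)=\sum_i\tau_{orb}(M_i)$, so
$$\nu_+(M)\ \geq\ \nu(M)\ \geq\ 12\pi^2\tau_{orb}(M)\ =\ 12\pi^2\sum_i\tau_{orb}(M_i).$$

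For the matching upper bound on $\nu(M)$, I would first pin down $\nu(M_i)=12\pi^2\tau_{orb}(M_i)$ exactly. The $\geq$ direction is (\ref{DSLee-999}) applied to $M_i$; the reverse direction comes from testing against the given self-dual metric $g_i$, for which $\mathcal{W}_-(M_i,[g_i])=0$ and so by $\mathcal{W}_+-\mathcal{W}_-=12\pi^2\tau_{orb}$ we get $\mathcal{W}(M_i,[g_i])=\mathcal{W}_+(M_i,[g_i])=12\pi^2\tau_{orb}(M_i)$. Iterating Kobayashi's connected-sum inequality $\nu(M_1\#M_2)\leq\nu(M_1)+\nu(M_2)$, which extends to the orbifold setting with no change to the construction in \cite{koba}, and using $\nu(X)=0$ by hypothesis, yields
$$\nu(M)\ \leq\ \sum_i\nu(M_i)+\nu(X)\ =\ 12\pi^2\sum_i\tau_{orb}(M_i),$$
matching the lower bound and giving the first assertion.

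The $\nu_+$ statement follows by the identical template, with (\ref{vaccine-president}) playing the role of Kobayashi's inequality. Concretely, the hypothesis that each self-dual $g_i$ has positive Yamabe constant places $g_i\in\mathcal{C}_+$, so testing against it gives $\nu_+(M_i)\leq\mathcal{W}(M_i,[g_i])=12\pi^2\tau_{orb}(M_i)$, while $\nu_+(M_i)\geq\nu(M_i)=12\pi^2\tau_{orb}(M_i)$ is immediate; applying (\ref{vaccine-president}) with $\nu_+(X)=0$ then closes the sandwich.

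The one step that warrants care is confirming that the orbifold connected-sum construction underlying (\ref{vaccine-president}) actually delivers a metric on $M$ with positive Yamabe constant, so that the resulting test metric lies in $\mathcal{C}_+(M)$ and (\ref{vaccine-president}) is nonvacuous. This is precisely the orbifold extension of the connected-sum theorem for the Yamabe constant recalled from \cite{koba2} at the end of Subsection 3.5: connect-summing orbifolds of positive Yamabe constant along a sufficiently long cylindrical neck preserves positivity. Granted this, no further analytic input is needed and the proof is complete.
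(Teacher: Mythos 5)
Your proposal is correct and follows essentially the same route as the paper: the lower bound from (\ref{DSLee-999}) together with additivity of $\tau_{orb}$ under connected sum, and the matching upper bound from $\nu(M_i)=12\pi^2\tau_{orb}(M_i)$ (equality case of (\ref{DSLee-999}) for self-dual orbifolds) combined with the connected-sum subadditivity (\ref{vaccine-president}), with the $\nu_+$ case handled by the identical template. Your extra remark that the orbifold connected-sum construction must actually produce a metric of positive Yamabe constant is a point the paper's proof leaves implicit, relying on the discussion of \cite{koba2} in Subsection 3.5, and it is good that you flagged it.
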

\begin{proof}
By applying (\ref{DSLee-999}) to self-dual oribifold $M_i$,
$$\nu(M_i)=12\pi^2\tau_{orb}(M_i),$$ and
\begin{eqnarray*}
\nu(M)&\geq& 12\pi^2\tau_{orb}(M) \\&=&  12\pi^2(\sum_i\tau_{orb}(M_i)+\tau_{orb}(X))\\ &=& 12\pi^2\sum_i\tau_{orb}(M_i).
\end{eqnarray*}
On the other hand by the connected sum formula (\ref{vaccine-president})
\begin{eqnarray*}
\nu(M)&\leq& \sum_i\nu(M_i)+\nu(X)\\ &=& 12\pi^2\sum_i\tau_{orb}(M_i).
\end{eqnarray*}
The 2nd statement is proved in the same way.
\end{proof}

For such example of $X$ with $\nu_+(X)=\tau_{orb}(X)=0$, there are
$$S^1\times S^3/G\ \ \ \textrm{or}\ \ \  S^4/G'\ \ \  \textrm{or}\ \ \ S^2\times T^2$$ for 3-dimensional spherical space form groups $G$ and $G'$, because the first two have conformally-flat metrics with positive scalar curvature and the third one collapses with curvature bounded and scalar curvature positive. In fact any connected sum of those has the same property. Thus by using the above theorem and the fact that the Fubini-Study metric of $\Bbb CP^2$ is a self-dual metric of positive scalar curvature,
$$\nu(k\Bbb CP^2\#l(S^4/G')\#m(S^1\times  S^3/G)\# n(S^2\times T^2))=12k\pi^2$$ and $\nu_+$ has the same value.

To find more examples, we briefly review some self-dual oribfolds. First we consider the ALE gravitational instanton $X_\Gamma$.
For $\Gamma\subset SU(2)$ corresponding to the A-D-E root system, $X_\Gamma$ is defined as the minimal resolution of an orbifold $\Bbb C^2/\Gamma$, and it admits a complete Riemannian metric $g_{_\Gamma}$ with holonomy $SU(2)$. There exists its conformal compactification $\hat{X}_\Gamma$ as a smooth orbifold.

Since any scalar-flat K\"ahler metric is anti-self-dual, $X_\Gamma$ and $\hat{X}_\Gamma$ with the orientation opposite to the complex one is self-dual, and the explicit computation of $\mathcal{W}_+$ of them can be easily read from
\begin{eqnarray*}
\nu(\hat{X}_\Gamma) &=& \mathcal{W}_+(X_\Gamma,[g_{_\Gamma}])\\ &=&\int_{X_\Gamma}|R|^2d\mu_{g_{_\Gamma}}\\ &=& 8\pi^2(\chi(X_\Gamma)-\frac{1}{|\Gamma|})\\ &=& 8\pi^2(|\Gamma|-\frac{1}{|\Gamma|})
\end{eqnarray*}
by using the vanishing of all other parts of Riemann curvature tensor $R$ of $g_{_\Gamma}$ and the Chern-Gauss-Bonnet formula on an ALE manifold.
By the result of Viaclovsky \cite{viaclo2}  $Y(\hat{X}_\Gamma, [\hat{g}_{_\Gamma}])$ for an orbifold metric $\hat{g}_{_\Gamma}$ conformal to $g_{_\Gamma}$ is exactly equal to $Y(S^4/\Gamma)=\frac{Y(S^4)}{\sqrt{|\Gamma|}}$ saturating the generalized Aubin's inequality.

For the 2nd example, let's consider the total space $L_n$ of complex line bundle over $\Bbb CP^1$ with the 1st Chern number $-n\leq -1$. Due to LeBrun \cite{Le}, $L_n$ carries an explicit ALE K\"ahler metric $g_n$ with zero scalar curvature. In this case too, there exists its conformal compactification $\hat{L}_n$ as a smooth orbifold. The orbifold group of the unique orbifold point is $\Bbb Z_n$ generated by the map given by $$(z_1,z_2)\mapsto (e^{\frac{2\pi i}{n}}z_1,e^{-\frac{2\pi i}{n}}z_2)$$ (See \cite{viaclo1}.)

To compute $\eta(S^3/\Bbb Z_n)$ for such a $\Bbb Z_n$-action, one can use the formula from \cite{APS, Lock2, naka}
\begin{eqnarray*}
\eta(S^3/\Bbb Z_n)&=& -\frac{1}{n}\sum_{k=1}^{n-1}\cot^2(\frac{k\pi}{n})\\ &=& -\frac{(n-1)(n-2)}{3n}.
\end{eqnarray*}
Since $g_n$ is scalar-flat, $Y(\hat{L}_n,[\hat{g}])$ for an orbifold metric $\hat{g}$ conformal to $g_n$ on $L_n$ is positive.
With respect to the reverse orientation, $\hat{g}$ is self-dual and hence
\begin{eqnarray*}
\nu(\hat{L}_n) &=& 12\pi^2\tau_{orb}(\hat{L}_n)\\ &=& 12\pi^2(1+\frac{(n-1)(n-2)}{3n}).
\end{eqnarray*}
(Recall that eta invariant changes sign when reversing the orientation.)
In fact this self-dual orbifold turns out to be the weighted projective plane $\Bbb CP^2_{(1,1,n)}$.

The weighted projective plane $\Bbb CP^2_{(d_1,d_2,d_3)}$ for coprime positive integers $d_i$ is defined as the quotient of $\Bbb C^3-\{0\}$ by the $\Bbb C^*$-action $$\lambda\cdot(z_1,z_2,z_3)=(\lambda^{d_1}z_1,\lambda^{d_2}z_2,\lambda^{d_3}z_3), \ \ \ \ \forall \lambda\in \Bbb C^*:=\Bbb C-\{0\},$$ so it has 3 orbifold points $[1:0:0], [0:1:0], [0:0:1]$ with orbifold groups $\Bbb Z_{d_1}, \Bbb Z_{d_2}, \Bbb Z_{d_3}$ respectively.
It is simply-connected (in fact $\pi_1^{orb}=0$), and its cohomology is isomorphic to that of $\Bbb CP^2$ so that $\chi=3$ and $\tau=1$.
By the well-known computations \cite{viaclo3}, its eta invariant term is given by
$$\sum_i\eta(S^3/\Bbb Z_{d_i})= -1+\frac{d_1^2+d_2^2+d_3^2}{3d_1d_2d_3}.$$
By the result of Bryant \cite{Bry} any weighted projective plane supports a self-dual K\"ahler metric,\footnote{Many of them are found to have positive Yamabe constant. See \cite{Bry, viaclo3}.}  so
\begin{eqnarray*}
\nu(\Bbb CP^2_{(d_1,d_2,d_3)})&=& 12\pi^2\tau_{orb}(\Bbb CP^2_{(d_1,d_2,d_3)})\\ &=& 4\pi^2\frac{d_1^2+d_2^2+d_3^2}{d_1d_2d_3}.
\end{eqnarray*}

In summary, one can compute the $\nu$ and $\nu_+$ invariants of any connected sum of these self-dual orbifolds and those with $\nu=\tau_{orb}=0$. The reader might consult recent works of Viaclovsky and Lock for more examples of self-dual orbifolds.

Of course Theorem \ref{th3} (v) is also a useful tool for computing $\nu_+$. A K\"aher-Einstein 4-orbifold $(M,g)$ of positive scalar curvature achieves $\nu_+(M)$.
For example $M:=(S^2\times S^2)/\Bbb Z_2$ in Example \ref{Pastor-HYJ} has such metric $g$ so that
\begin{eqnarray*}
\nu_+(M)&=&-12\pi^2\tau_{orb}(M)+2\mathcal{W}_+(M,[g])\\ &=& 0+2\frac{4\pi^2}{3}(2\chi_{orb}(M)+3\tau_{orb}(M))\\ &=& \frac{32\pi^2}{3}.
\end{eqnarray*}
But we don't know how to compute $\nu(M)$ for such $M$ due to the paucity of information on $\mathcal{W}_+(M,[g])$ when $Y(M,[g])<0$.

\section{Final remarks}

While it seems impossible to obtain a Gursky-type inequality holding for all conformal classes of negative Yamabe constant on a given 4-manifold,
it's possible to obtain a lower bound of $\mathcal{W}_+$ for conformal classes with negative Yamabe constant, if a lower bound of Yamabe constant is given.
For example H. Seshadri \cite{harish} obtained a sharp inequality
$$\mathcal{W}_+(M,[g])\geq 2\pi^2(\frac{2\chi(M)}{1+c^2/24}+3\tau(M))$$ when the \textit{modified scalar curvature} $s+c|W|$ for a constant $c>0$ is nonnegative, and
similarly M. Itoh \cite{itoh} obtained a sharp inequality
$$\mathcal{W}_+(M,[g])\geq \frac{4\pi^2}{3}(2\chi(M)+3\tau(M))$$
if the modified scalar curvature $s-6w^-$ is nonnegative where $w^-$ is the lowest eigenvalue of $W^+$ pointwisely.

On the other hand, it's noteworthy that LeBrun \cite{LeB2} observed that Gursky's first inequality (i) holds on any conformal class of a del Pezzo surface satisfying that its self-dual harmonic 2-form is nowhere vanishing.

Due to the diverse possibility of orbifold singularities, the topological classification of a canonical geometry on 4-oribifolds is more difficult than that of 4-manifolds, as seen in the case of weighted projective planes. Nevertheless we hope that Theorem \ref{th3} can be further exploited to give some topological restrictions to the existence of canonical metrics of positive scalar curvature on 4-orbifolds.

It would be also interesting to obtain similar estimates of $\mathcal{W}_+$ for an edge-cone metric in view of its great interest in K\"ahler geometry.

\bigskip

\noindent{\bf Acknowledgement.}
We would like to give special thanks to colleagues and students of KNUE where the first idea of this work was conceived and also researchers at KIAS for sharing fellowship through mathematics. It's our pleasure to express sincere thanks to Claude LeBrun for kind suggestions leading to the improvement of the earlier version of this paper, particularly on the residually finite fundamental group and the Lefschetz fixed point theorem. We are also grateful to the anonymous referee for helpful remarks in examples.

\end{document}